\documentclass[numbers]{imsart}



\usepackage{amsthm}
\usepackage{amsfonts}
\usepackage{graphicx}
\usepackage{subfig}
\usepackage{latexsym}
\usepackage{url}
\usepackage{bbm}
\usepackage{mathtools}
\mathtoolsset{showonlyrefs}

\startlocaldefs
\theoremstyle{plain}

\newtheorem{prop}{Proposition}[section]
\newtheorem{cor}[prop]{Corollary}
\newtheorem{theo}[prop]{Theorem}
\newtheorem{lem}[prop]{Lemma}
\theoremstyle{remark}

\newtheorem{rem}[prop]{Remark}
\newtheorem{ex}[prop]{Example}

\newcommand{\R}{\mathbb{R}}

\newcommand{\E}{\mathbb{E}}
\newcommand{\N}{\mathbb{N}}

\renewcommand{\P}{\mathbb P}

\newcommand{\al}{\alpha}
\newcommand{\be}{\beta}
\newcommand{\la}{\lambda}

\newcommand{\ga}{\gamma}

\newcommand{\ep}{\varepsilon}

\newcommand{\de}{\delta}
\newcommand{\te}{\theta}

\newcommand{\Te}{\Theta}
\newcommand{\De}{\Delta}

\newcommand{\ia}{\mathcal I}

\newcommand{\ca}{\mathcal C}

\newcommand{\f}{\mathcal F}

\newcommand{\schw}{\stackrel{\raisebox{-1pt}{\textup{\tiny d}}}{\longrightarrow}}

\newcommand{\wte}{\widehat{\theta}}

\DeclareMathOperator*{\argmin}{argmin}

\newcommand{\lan}{\langle}
\newcommand{\ran}{\rangle}

\newcommand{\bee}{\begin{equation}}
\newcommand{\eee}{\end{equation}}
\newcommand{\beea}{\begin{array}}
\newcommand{\eeea}{\end{array}}

\newcommand{\thetal}{\widehat{\theta}_L}

\newcommand{\supptheta}{\mathcal{S}(\theta)}

\newcommand{\btheta}{b_\theta (X_t)}
\newcommand{\bvartheta}{b_\vartheta (X_t)}

\endlocaldefs

\begin{document}

\begin{frontmatter}
\title{On Lasso estimator for the drift function  in diffusion models}
\runtitle{On Lasso estimator for the drift function  in diffusion models}

\begin{aug}
\author[A]{\fnms{Gabriela}~\snm{Cio\l{}ek}\ead[label=e1]{gabriela.ciolek@uni.lu}},
\author[B]{\fnms{Dmytro}~\snm{Marushkevych}\ead[label=e2]{dmytro.m@math.ku.dk}}
\and
\author[A]{\fnms{Mark}~\snm{Podolskij}\ead[label=e3]{mark.podolskij@uni.lu}}
\address[A]{Department of Mathematics, University of Luxembourg \printead[presep={,\ }]{e1,e3}}

\address[B]{Department of Mathematics,
University or Copenhagen \printead[presep={,\ }]{e2}}
\end{aug}

\begin{abstract}
In this paper we study the properties of the Lasso estimator of the drift component in the diffusion setting. More specifically, 
we consider a multivariate parametric diffusion model $X$ observed continuously over the interval $[0,T]$ and investigate drift estimation
under sparsity constraints. We allow the dimensions of the model and the parameter space to be large. We obtain an oracle inequality for the Lasso
estimator and derive an error bound for the $L^2$-distance using concentration inequalities for linear functionals of diffusion processes. The probabilistic part 
is based upon elements of empirical processes theory and, in particular, on the chaining method.
\end{abstract}


\begin{keyword}
\kwd{Concentration inequalities}
\kwd{diffusion models}
\kwd{high dimensional statistics}
\kwd{Lasso}
\kwd{parametric estimation}
\end{keyword}

\end{frontmatter}

\section{Introduction} \label{sec1}
\setcounter{equation}{0}
\renewcommand{\theequation}{\thesection.\arabic{equation}}

During past decades numerous studies have been devoted to statistical inference for diffusion models. Researchers typically focus on 
parametric and nonparametric estimation of the drift and volatility components under various sampling schemes. We refer to excellent monographs
\cite{JP12,KS97,K04} for an extensive study in high and low frequency settings.  

While most papers consider a fixed dimensional framework, high dimensional statistical problems received much less attention in the field of diffusion processes. The only notable exceptions are recent articles \cite{CMP20,DS22,F19,GM19,PI14} that investigate parametric inference for high dimensional Ornstein-Uhlenbeck models or related processes with \textit{linear} drift.  Their approach is based upon the Lasso method and Dantzig selection, which have been popularised in the context of classical discrete linear models; see e.g.   \cite{BV,Can07} among many other references. Due to a more complex probabilistic structure of diffusion models, statistical analysis of Lasso and Dantzig estimators are far from trivial even in the Ornstein-Uhlenbeck setting. In particular, concentration inequalities for functionals of diffusion processes, which build a backbone for statistical applications, turn out to pose serious challenges. 

In this paper we focus on high dimensional parametric estimation of the drift function in diffusion models. 
We consider a $d$-dimensional ergodic diffusion process given as a strong solution of the stochastic differential equation 
 \bee \label{DP}
dX_t= - b_\theta(X_t) dt + dW_t, \qquad t \geq 0,
\eee
\noindent where $\theta \in \Theta\subseteq  \mathbb{R}^p$ is the model parameter, $\Te$ is an open set, $b_\theta: \R^d\to\R^d$, $W=(W_t)_{t\geq 0}$ is a $d$-dimensional Brownian motion and the initial value $X_0$ is a random vector independent of $W$ (here we suppress the dependence of $X$ on $\te$). The underlying observations are $(X_t)_{t\in [0,T]}$ and we assume that $p$, $d$ and $T$ are large. Such high dimensional models find numerous applications in physics, economics and biology among other sciences. For instance, high dimensional diffusions are studied in the context  of particle systems within \textit{mean field theory}. Probabilistic results in this topic can be found in the classical works  \cite{M66,M67,S91}  while applications of particle systems in sciences are studied in \cite{BCC11,CZ16,FTC09} among many others. More recently, statistical inference for large particle systems and their mean field limits has been investigated in \cite{BPP22,DH21,GL20,K90,SKPP21}. However, the authors only consider a finite dimensional parameter space $\Theta$. 

This work aims at investigating the statistical behaviour of the Lasso estimator of the unknown parameter $\te_0$ under the sparsity constraint:
\bee\label{sparsity}
\| \theta_0\|_0:=\#\{1\leq i \leq p : \theta_{0,i}\neq 0  \} = s_0. 
\eee
Our approach is based on the penalised maximum likelihood estimation and we derive an oracle inequality for the resulting estimator as well as statements about the error bound for the $L^2$-distance in the non-asymptotic framework. The main theoretical results heavily use empirical processes theory and the method of generic chaining, and the upper bound is explicitly expressed in terms of $p$, $d$, $T$ and certain entropy numbers. To the best of our knowledge this is the first theoretical study of the Lasso method in the setting of general drift functions  and high dimensional parameter space $\Te$.  We emphasise that the mathematical methodology is much more involved than the approach in \cite{CMP20,GM19} as the latter use very specific properties of Ornstein-Uhlenbeck models; in particular, the linearity of the drift and the quadratic form of the likelihood function are absolutely crucial in this context.  Related works include \cite{DI12} that studies Lasso estimation in the finite dimensional setting and  \cite{S17} that investigates general theory for maximum likelihood estimators with quadratic penalisations. We remark however that their methods can not be applied in our context.     

The rest of the paper is structured as follows. Section \ref{sec2} introduces basic assumptions, main examples and the classical statistical theory in the finite dimensional setting. Section \ref{sec3} is devoted to the derivation of the oracle inequality for the Lasso estimator. In Section \ref{sec4} we introduce the necessary probabilistic background and obtain some concentration inequalities.    We derive the error bound for the Lasso estimator in Section \ref{sec5} and consider results of some numerical experiments in Section \ref{secNE}. Finally, Section \ref{sec6} contains proofs of the main results.

\subsection*{Notation} 
All vectors are understood as column vectors. For a matrix $A$ we denote by $A^{\star}$
the transpose of $A$.
For $x\in \R^d$ we write $\|x\|_{q}$, $q\in [1,\infty]$, 
to denote the $l_q$-norm of $x$; we set $\| \cdot \|:=\| \cdot \|_2$. The operator norm of a matrix $A\in \R^{d\times d}$ is denoted by $\|A\|_{\text{op}}$; for 
$A,B \in \R^{d\times d}$ we write $A\geq B$ if $B-A$ is positive semidefinite. 
For a set $\mathcal S \subset \{1,\ldots, d\}$ and $x\in \R^d$
we use the notiation
\[
(x|_{\mathcal S})_j: = x_j 1_{\{j \in  \mathcal S \}} \qquad j=1,\ldots, d.
\]
For a function $f: \Te \times \R^d \to \R^l$ we denote by $\dot f$ (resp. $\nabla f$)  the derivative in $\te\in \Te$ (resp. in $x \in  \R^d$). We say that a function $f:  \R^d \to \R^l$ has \textit{polynomial growth} when $\|f(x)\|\leq C (1+\|x\|^q)$ for some $q,C>0$. We write $\|f\|_{\text{Lip}}$ to denote the Lipschitz constant if $f$
is Lipschitz continuous. For a continuous martingale $(M_t)_{t\geq 0}$ we denote by 
$(\langle M \rangle_t)_{t\geq 0}$ its quadratic variation process.

\section{Overview and examples} \label{sec2}
\setcounter{equation}{0}
\renewcommand{\theequation}{\thesection.\arabic{equation}}

In this section we introduce the main assumptions on the model   \eqref{DP} and briefly review classical results about maximum likelihood estimation in finite dimension. We consider the following conditions:
\begin{align}
&\text{The function } b_\theta:\R^d \to \R^d \text{ is locally Lipschitz,}
\label{bcond1}\\[1.5 ex]
&(b_\theta(x)-b_\theta(y))^{\star}(x-y) \geq M \| x-y \|^2  \label{bcond2}
\end{align}
for a constant $M>0$, $x,y \in \R^d$ and $\te \in \Te$.
These conditions imply that equation \eqref{DP} has a unique strong solution, which is a homogeneous continuous Markov process and exhibits an invariant distribution (cf.  \cite[Theorem 12.1]{RogWil}). 
\noindent 
Finally, we assume that initial value $X_0$ follows the invariant law such that the process $X$ is strictly stationary.

We suppose that  the complete path $(X_t)_{t\in[0,T]}$ is observed and we are interested in estimating the unknown parameter $\theta_0\in \Te$. 
Let us briefly recall the classical maximum likelihood theory when $d$ and $p$ are fixed, and $T\to \infty$. We denote by $\P_{\theta}^T$ (resp. $\P_{0}^T$)  the law of the process \eqref{DP} with parameter $\theta$ (resp. drift zero) restricted to $\f_{T}$.  The (scaled) negative log-likelihood function is explicitly computed via Girsanov's theorem as  
\bee \label{loglike}
\mathcal{L}_T(\theta) = \log(\P_{\theta}^T/ \P_{0}^T) = \frac{1}{T} \int_0^T b_\theta^{\star}(X_t)dX_t + \frac{1}{2T}\int_0^T \|b_\theta(X_t) \|^2dt.
\eee 
and we define the maximum likelihood estimator by
\bee\label{MLEdef}
\wte_{\text{M}}=\argmin_{\theta \in \Te} \mathcal{L}_T(\theta). 
\eee 
To obtain the asymptotic normality of the estimator $\wte_{\text{M}}$ the following conditions are imposed: \\ 

\noindent
($\mathcal{A}$): (a) We assume that for any function $f_{\te}:\R^d \to \R$ with polynomial growth and any $q>0$, there exists $C_q>0$ such that
\[
\sup_{\te \in \Te} \E_{\te} \left[\left| \frac{1}{T} \int_0^T \left(f_{\te}(X_t) - \E_{\te}[f_{\te}(X_0)] \right) dt \right|^q \right] \leq C_q T^{-q/2}.
\]
(b) The drift function $b$ is continuously differentiable in $\te$ such that $\dot b_{\theta}(\cdot)$ has polynomial growth.  Furthermore, $\dot b$ is \textit{uniformly continuous}, i.e. for any compact set $K\subset \Te$ it holds that 
\[
\lim_{\de\to 0} \sup_{\te_0 \in K} \sup_{\|\te-\te_0\|<\de} \E_{\te_0} [\|\dot b_{\te}(X_0) - \dot b_{\te_0}(X_0)\|^2] = 0 
\] 
(c) Define the information matrix 
\[
I(\te):= \E_{\te}[\dot b_{\te}(X_0) \dot b_{\te}(X_0)^{\star}].
\]
We assume that $I(\te)$ is positive definite for all $\te \in \Te$. Furthermore, 
for any compact set $K\subset \Te$ it holds that 
\[
\inf_{\te \in K}~ \inf_{e\in \R^p: ~\|e\|=1} e^{\star} I(\te) e>0 
\]
and 
\[
\inf_{\te_0 \in K} \inf_{\|\te-\te_0\|>\de} \E_{\te_0}[\|b_{\te}(X_0) -  b_{\te_0}(X_0)\|^2]>0
\]
for any $\de>0$. \\

\noindent
We emphasise that the last condition in ($\mathcal{A}$)(c) ensures the identifiability of the parameter $\te$. The following result corresponds to \cite[Theorem 2.8]{K04} adapted to the setting of the model  \eqref{DP} with $d>1$. 

\begin{theo}
Consider the model \eqref{DP} and assume that conditions \eqref{bcond2}, ($\mathcal{A}$) hold. 
Then we obtain the central limit theorem
\[
\sqrt{T} (\wte_{\text{M}} - \te_0) \schw \mathcal N_p\left(0, I(\te_0)^{-1}\right).
\]
\end{theo}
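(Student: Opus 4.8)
The plan is to follow the classical M-estimation route: establish consistency of $\wte_{\text{M}}$, linearise the score equation around $\te_0$, identify the leading stochastic term as a martingale obeying a central limit theorem, and control the Hessian by an ergodic law of large numbers. Throughout, the quantitative ergodic bound ($\mathcal{A}$)(a) serves as a uniform law of large numbers with the correct $T^{-1/2}$ rate. Writing the score as the $\te$-gradient of the contrast,
\[
\dot{\lll}_T(\te) = \frac1T \int_0^T \dot b_\te(X_t)\,dX_t + \frac1T \int_0^T \dot b_\te(X_t) b_\te(X_t)\,dt,
\]
and denoting by $\ddot{\lll}_T(\te)$ the Hessian obtained by a further differentiation in $\te$, the argument splits into a deterministic identification part and a stochastic fluctuation part.

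First I would prove consistency. Substituting $dX_t = -b_{\te_0}(X_t)\,dt + dW_t$ and discarding the martingale term $\frac1T\int_0^T b_\te^{\star}(X_t)\,dW_t$, which is $O_{\P}(T^{-1/2})$ by ($\mathcal{A}$)(a), the contrast difference satisfies
\[
\lll_T(\te) - \lll_T(\te_0) \toop \tfrac12\, \E_{\te_0}\big[\|b_\te(X_0) - b_{\te_0}(X_0)\|^2\big] \geq 0,
\]
with equality only at $\te_0$ by the identifiability part of ($\mathcal{A}$)(c). A locally uniform version of this convergence, again furnished by ($\mathcal{A}$)(a), together with $\Te$ open, then yields $\wte_{\text{M}} \toop \te_0$ and, eventually, the first-order condition $\dot{\lll}_T(\wte_{\text{M}}) = 0$.

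Next I would exploit the martingale structure at the truth. Plugging $dX_t = -b_{\te_0}\,dt + dW_t$ into $\dot{\lll}_T(\te_0)$, the two drift contributions cancel and we are left with the pure martingale
\[
\sqrt{T}\,\dot{\lll}_T(\te_0) = \frac{1}{\sqrt T} \int_0^T \dot b_{\te_0}(X_t)\,dW_t,
\]
a continuous $p$-dimensional local martingale in $T$ whose quadratic variation $\frac1T\int_0^T \dot b_{\te_0}(X_t)\dot b_{\te_0}(X_t)^{\star}\,dt \toop I(\te_0)$ by ($\mathcal{A}$)(a). The continuous-martingale central limit theorem then gives $\sqrt{T}\,\dot{\lll}_T(\te_0) \schw \mathcal N_p(0, I(\te_0))$. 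A componentwise mean-value expansion of the score equation yields
\[
\sqrt{T}(\wte_{\text{M}} - \te_0) = -\big[\ddot{\lll}_T(\bar\te)\big]^{-1} \sqrt{T}\,\dot{\lll}_T(\te_0)
\]
for some $\bar\te$ on the segment joining $\wte_{\text{M}}$ and $\te_0$. Differentiating the score once more and substituting the dynamics at $\te_0$ shows $\ddot{\lll}_T(\te_0) \toop I(\te_0)$, since the stochastic-integral terms vanish and the remaining ergodic average converges to $I(\te_0)$. Upgrading this to $\ddot{\lll}_T(\bar\te) \toop I(\te_0)$ via consistency, the uniform continuity of $\dot b$ in ($\mathcal{A}$)(b), and the positive definiteness of $I$ in ($\mathcal{A}$)(c), Slutsky's lemma delivers
\[
\sqrt{T}(\wte_{\text{M}} - \te_0) \schw -I(\te_0)^{-1}\,\mathcal N_p(0, I(\te_0)) = \mathcal N_p\big(0, I(\te_0)^{-1}\big).
\]

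The main obstacle is this last step, the uniform handling of the Hessian at the random intermediate point $\bar\te$. This requires promoting the pointwise ergodic convergence $\ddot{\lll}_T(\te)\to I(\te)$ to a locally uniform one, controlling both the ergodic averages of the (polynomially growing) first and second $\te$-derivatives through ($\mathcal{A}$)(a) and the stochastic-integral remainder terms uniformly over a shrinking neighbourhood of $\te_0$. The uniform-continuity hypothesis ($\mathcal{A}$)(b) is precisely what closes the gap between $\bar\te$ and $\te_0$, while positive definiteness from ($\mathcal{A}$)(c) guarantees that the inverse Hessian, and hence the limiting covariance $I(\te_0)^{-1}$, is well defined.
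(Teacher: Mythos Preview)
The paper does not give its own proof of this theorem: it simply records the statement and points to \cite[Theorem 2.8]{K04}, so there is no in-paper argument to compare against. Your route is the classical score-expansion/M-estimation argument, which is a reasonable alternative to the Ibragimov--Has'minskii/LAN machinery typically used in Kutoyants' book.

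That said, there is a genuine gap in your plan. The mean-value expansion step invokes the Hessian $\ddot{\lll}_T(\bar\te)$, and differentiating the score
\[
\dot{\lll}_T(\te)=\frac1T\int_0^T \dot b_\te(X_t)\,dX_t+\frac1T\int_0^T \dot b_\te(X_t)b_\te(X_t)\,dt
\]
once more produces terms containing the second $\te$-derivative $\ddot b_\te$. Assumption ($\mathcal{A}$)(b), however, only grants that $b$ is \emph{once} continuously differentiable in $\te$ (with $\dot b_\te$ of polynomial growth and uniformly continuous in the stated $L^2$ sense); no second derivative is assumed to exist, let alone to have polynomial growth. So as written the Hessian step is unjustified. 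The usual remedy is to avoid the Hessian altogether: either argue via LAN and the Ibragimov--Has'minskii scheme (which is what \cite{K04} does and only needs first derivatives plus the uniform continuity in ($\mathcal{A}$)(b)), or replace the second-order Taylor expansion by a first-order one of the map $\te\mapsto \dot b_\te$ in $L^2(\mu_{\te_0})$, using the uniform continuity hypothesis directly to control the remainder $\dot{\lll}_T(\wte_{\text{M}})-\dot{\lll}_T(\te_0)-I(\te_0)(\wte_{\text{M}}-\te_0)$ without ever writing $\ddot b$.
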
 

\noindent
To conclude this section we demonstrate some examples of multivariate diffusion models, which are commonly used in the literature.

\begin{ex} \label{ex1} \rm
(i) (Ornstein-Uhlenbeck model) For a matrix $A\in \R^{d\times d}$ consider the diffusion model
\[
dX_t= - AX_t dt + dW_t.
\]
It is well known that this SDE possesses an ergodic strong solutions if all eigenvalues of $A$ have strictly positive real parts.  
High dimensional estimation of Ornstein-Uhlenbeck model has been discussed in  \cite{CMP20,DS22,GM19}. \\ \\
(ii) (General linear models) We may consider extensions of the Ornstein-Uhlenbeck model to more general linear functions.
In this setting we are interested in the model
\[
b_\theta = \phi_0+ \sum_{j=1}^p \theta_j \phi_j,
\]
where $\theta_j>0$ and the functions $\phi_j: \R^d \to\R^d$ satisfy the condition \eqref{bcond2}.
High dimensional inference for linear models have been investigated in \cite{F19,PI14}. \\ \\
(iii) (Langevin equation) Assume that there exists a potential $V_\theta \in C^1(\R^d;\R)$ such that
\[
b_\theta=\nabla V_\theta.
\]
This type of models is frequently used in Monte Carlo simulations. According to  \cite[Theorem 3.5]{Bhat}, under condition \eqref{bcond2} the diffusion process $X$ is ergodic with unique invariant probability measure that is absolutely continuous with respect to the Lebesgue measure and its density is given by 
\bee\label{invmeasure}
\mu_\theta(x)= \left( \int_{\R^d} \exp(-2V_{\theta}(y)) dy \right)^{-1} \exp(-2V_{\theta}(x)), \qquad x\in \R^d. 
\eee
\qed
\end{ex}

\section{Oracle inequality for the Lasso estimator} \label{sec3}
\setcounter{equation}{0}
\renewcommand{\theequation}{\thesection.\arabic{equation}}

In this section we turn our attention to large $p$/large $d$/large $T$ setting. We consider the diffusion model \eqref{DP} and assume that the unknown parameter 
$\theta_0$ satisfies the sparsity constraint \eqref{sparsity}. A standard approach to estimate $\theta$ under the sparsity constraint is the Lasso method, which has been investigated in \cite{CMP20,GM19}  in the framework of an Ornstein-Uhlenbeck model. We define the Lasso estimator as the penalised MLE:
\bee \label{lasso}
\thetal= \argmin_{\theta \in \Theta}\big( \mathcal{L}_T(\theta) + \lambda \| \theta \|_1\big),
\eee
where $\la>0$ is a tuning parameter and the (scaled) negative log-likelihood $ \mathcal{L}_T(\theta)$ has been introduced at  \eqref{loglike}.  
Our first goal is to obtain a basic inequality, which provides a basis for the oracle inequality. For this purpose we introduce some notations. 
For two smooth functions $f,g: \R^d \to \R^d$ we define a random bi-linear form as
\bee
\lan f,g \ran_{T}:= \frac{1}{T} \int_0^T f^{\star}(X_t) g(X_t) dt
\eee
and set $\|f\|_{T}:= \sqrt{\lan f,f \ran_{T}}$. Furthermore, for $\theta$, $\vartheta\in \R^p$ we consider the following random function:
\bee\label{empbias2}
G(\theta, \vartheta) = \frac{1}{T} \int_0^T \big( \btheta - \bvartheta \big)^{\star} dW_t. 
\eee
By definition of the Lasso estimator we obtain our first result.
\begin{lem} \label{BasicIn} (Basic inequality)
For any $\theta \in \R^p$ it holds that 
\bee \label{basic}
\|b_{\thetal} - b_{\theta_0} \|_{T}^2
 \leq \|b_{\theta} - b_{\theta_0} \|_{T}^2  +2 G(\theta, \thetal)   +2 \lambda \big( \| \theta\|_1 -\|  \thetal \|_1 \big).
\eee
\end{lem}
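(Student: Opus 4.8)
The plan is to derive \eqref{basic} directly from the variational characterisation of the Lasso estimator, reducing the whole statement to an algebraic identity for the log-likelihood $\lll_T$. Since $\thetal$ minimises $\theta\mapsto \lll_T(\theta)+\lambda\|\theta\|_1$ over $\R^p$, the inequality
\[
\lll_T(\thetal)+\lambda\|\thetal\|_1 \;\leq\; \lll_T(\theta)+\lambda\|\theta\|_1
\]
holds for every $\theta\in\R^p$. The task is therefore to rewrite $\lll_T(\theta)-\lll_T(\thetal)$ so as to produce the quadratic term $\|b_\theta-b_{\theta_0}\|_T^2$ and the stochastic term $G(\theta,\thetal)$ on the right-hand side of \eqref{basic}.

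First I would express $\lll_T$ using the \emph{true} dynamics of the observed process. Substituting $dX_t=-b_{\theta_0}(X_t)\,dt+dW_t$ into \eqref{loglike} gives
\[
\lll_T(\theta)=\tfrac12\|b_\theta\|_T^2-\lan b_\theta,b_{\theta_0}\ran_T+\frac{1}{T}\int_0^T b_\theta^{\star}(X_t)\,dW_t .
\]
Completing the square in the first two terms against $\|b_{\theta_0}\|_T^2$ then yields the clean identity
\[
\lll_T(\theta)=\tfrac12\|b_\theta-b_{\theta_0}\|_T^2-\tfrac12\|b_{\theta_0}\|_T^2+\frac{1}{T}\int_0^T b_\theta^{\star}(X_t)\,dW_t ,
\]
in which the middle term is independent of the argument $\theta$.

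With this identity the conclusion is essentially bookkeeping. Applying it at $\thetal$ and at $\theta$ inside the variational inequality and rearranging, the $\theta$-free term $-\tfrac12\|b_{\theta_0}\|_T^2$ cancels, leaving
\[
\tfrac12\|b_{\thetal}-b_{\theta_0}\|_T^2\leq \tfrac12\|b_\theta-b_{\theta_0}\|_T^2+\frac{1}{T}\int_0^T\big(b_\theta(X_t)-b_{\thetal}(X_t)\big)^{\star}dW_t+\lambda\big(\|\theta\|_1-\|\thetal\|_1\big).
\]
The stochastic integral is exactly $G(\theta,\thetal)$ by \eqref{empbias2}, and multiplying through by $2$ recovers \eqref{basic}.

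I do not expect any serious obstacle here, as this is the diffusion analogue of the classical Lasso basic inequality. The only point demanding care is the substitution of the semimartingale decomposition of $X$ under $\theta_0$ into the Girsanov likelihood \eqref{loglike}; once this is done, completing the square and cancelling the $\theta$-independent term give the result immediately. The genuine difficulty of the paper lies not in this lemma but in subsequently controlling the stochastic term $G(\theta,\thetal)$ via concentration inequalities, which is deferred to the later sections.
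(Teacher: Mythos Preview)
Your proposal is correct and follows essentially the same route as the paper: rewrite $\lll_T(\theta)$ via the semimartingale decomposition of $X$ under $\theta_0$ to obtain the identity $\lll_T(\theta)=\tfrac12\|b_\theta-b_{\theta_0}\|_T^2-\tfrac12\|b_{\theta_0}\|_T^2+\frac{1}{T}\int_0^T b_\theta^{\star}(X_t)\,dW_t$, then insert this into the minimisation inequality $\lll_T(\thetal)+\lambda\|\thetal\|_1\leq \lll_T(\theta)+\lambda\|\theta\|_1$ and rearrange. The paper's proof is terser but uses exactly this identity (its equation \eqref{loglike2}) and the same cancellation.
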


\begin{proof}
The proof is rather standard.  We can write
\bee \label{loglike2}
\mathcal{L}_T(\theta) = \frac{1}{T}  \int_0^T b_\theta^{\star}(X_t) dW_t + \frac{1}{2} \left(  \|b_{\theta} - b_{\theta_0} \|_{T}^2  -  \|b_{\theta_0} \|_{T}^2\right).  
\eee 
By definition of $\thetal$ we also have that $\mathcal{L}_T(\thetal) + \lambda \| \thetal \|_1 \leq \mathcal{L}_T(\theta) + \lambda \| \theta \|_1$
for any $\te\in \Te$. This inequality together with \eqref{loglike2} implies the desired result.
\end{proof}

\noindent In the next step we would like to show an oracle inequality, which holds on sets of high probability. As common 
in the literature we require a good control of the empirical part $G(\theta, \thetal)$ as well as a version of the restricted eigenvalue property.\
For this purpose we introduce the sets $\mathcal{T}$ and $\mathcal{T}'$ via
\bee \label{ScT}
\mathcal{T}:= \left\{\left \| \sup_{\te \in \Te} \frac{1}{T} \int_0^T \dot b_{\te}(X_t)^{\star} dW_t  \right \|_{\infty} \leq \la/2 \right\}
\eee
\bee \label{ScTprime}
\mathcal{T}':= \left\{\inf_{\substack{\theta \in \R^p: \| \theta \|_0=s \\ \vartheta\in \R^p: \theta-\vartheta \in \mathcal{C}(s,3+4/\gamma)}} \frac{\|b_{\theta} - b_{\vartheta} \|_{T}}{\|  \theta - \vartheta\|_2}\geq k \right \}
\eee
Here the set $\mathcal{C}(s,c)$ is given via
\bee \label{Csc0}
\ca(s,c):= \left\{ x \in \R^{d}\setminus\{ 0 \} : ~\|x\|_1 \leq (1+c) \|x_{| \ia_s(x)}\|_1 \right\},
\eee
where $c>0$ and $\ia_s(x)$ is a set of coordinates of $s$ largest elements of $A$. The constant  $k$ in the definition of the set $\mathcal{T}'$
will be chosen later, while the constant $\ga>0$ remains arbitrary. On the set $\mathcal{T}$ we can control the empirical part $G(\theta, \thetal)$ while $\mathcal{T}'$, which is a version of the restricted eigenvalue condition, provides a connection between the empirical norm $\|\cdot\|_T$
and the Euclidean norm $\|\cdot\|_2$. 
The main result of this section is the following 
oracle inequality. 

\begin{theo} \label{mainth}
Assume that $\|\te\|_0=s$. On $\mathcal{T} \cap \mathcal{T}'$ it holds that 
\bee\label{basicineq}
\|b_{\thetal} - b_{\theta_0} \|_{T}^2 \leq (1+\gamma) \|b_{\theta} - b_{\theta_0} \|_{T}^2 +
4 \frac{(\gamma+2)^2s \lambda^2}{\gamma k^2}. 
\eee
\end{theo}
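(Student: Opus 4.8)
The plan is to combine the basic inequality of Lemma~\ref{BasicIn} with the two control events $\mathcal T$ and $\mathcal T'$. Writing $v:=\thetal-\theta$ and $S:=\supptheta$ (so that $|S|=s$), I would first use the event $\mathcal T$ to turn the stochastic term $2G(\theta,\thetal)$ into a deterministic multiple of $\|v\|_1$, then split $\|v\|_1$ into its parts on $S$ and $S^c$ by subadditivity of $\|\cdot\|_1$ together with the reverse triangle inequality, and finally invoke the restricted-eigenvalue bound encoded in $\mathcal T'$ to close the estimate. Throughout I use that the random seminorm $\|\cdot\|_T$ obeys the triangle inequality, since it arises from a positive semidefinite bilinear form.

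The key step, and the one that genuinely departs from the Ornstein--Uhlenbeck analysis of \cite{CMP20,GM19}, is the control of $G(\theta,\thetal)$ on $\mathcal T$. As $b$ is nonlinear in $\theta$, I would invoke the fundamental theorem of calculus to write $b_\theta(X_t)-b_{\thetal}(X_t)=\bigl(\int_0^1 \dot b_{\thetal+u(\theta-\thetal)}(X_t)\,du\bigr)(\theta-\thetal)$, insert this into the definition \eqref{empbias2} of $G$, and interchange the $du$-integration with the stochastic integral by Fubini. This gives $G(\theta,\thetal)=(\theta-\thetal)^\star\int_0^1\bigl(\tfrac1T\int_0^T \dot b_{\thetal+u(\theta-\thetal)}(X_t)^\star dW_t\bigr)du$, and H\"older's inequality for the $\ell_1/\ell_\infty$ duality together with the definition \eqref{ScT} of $\mathcal T$ yields $2G(\theta,\thetal)\le\lambda\|v\|_1$. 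The supremum over $\te$ in \eqref{ScT} is exactly what is needed here, since it controls the stochastic integral uniformly along the whole segment joining $\thetal$ and $\theta$; I expect this to be the main obstacle, as it is where the nonlinearity forces the passage through $\dot b$ and a uniform-in-$\theta$ bound.

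Inserting this estimate into \eqref{basic} and using $\|\theta\|_1-\|\thetal\|_1\le\|v_{|S}\|_1-\|v_{|S^c}\|_1$ (the reverse triangle inequality on $S$ combined with $\theta_{|S^c}=0$), I obtain the cone inequality $\|b_{\thetal}-b_{\theta_0}\|_T^2+\lambda\|v_{|S^c}\|_1\le\|b_\theta-b_{\theta_0}\|_T^2+3\lambda\|v_{|S}\|_1$. I would then distinguish two cases according to whether $\|b_\theta-b_{\theta_0}\|_T^2$ exceeds $\tfrac{4\lambda}{\gamma}\|v_{|S}\|_1$ or not. If it does, then $3\lambda\|v_{|S}\|_1<\tfrac{3\gamma}{4}\|b_\theta-b_{\theta_0}\|_T^2$ and the cone inequality gives $\|b_{\thetal}-b_{\theta_0}\|_T^2\le(1+\tfrac{3\gamma}{4})\|b_\theta-b_{\theta_0}\|_T^2\le(1+\gamma)\|b_\theta-b_{\theta_0}\|_T^2$, which is even stronger than \eqref{basicineq}. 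If it does not, the same inequality yields $\lambda\|v_{|S^c}\|_1\le(3+\tfrac4\gamma)\lambda\|v_{|S}\|_1$, hence $\|v\|_1\le(4+\tfrac4\gamma)\|v_{|S}\|_1\le(4+\tfrac4\gamma)\|v_{|\ia_s(v)}\|_1$, so that $v\in\ca(s,3+4/\gamma)$ by \eqref{Csc0}; since this cone is symmetric, $\theta-\thetal$ lies in it as well.

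In this second case I would apply $\mathcal T'$ to the admissible pair $(\theta,\thetal)$ to get $\|v\|_2\le\tfrac1k\|b_\theta-b_{\thetal}\|_T$, bound $\|v_{|S}\|_1\le\sqrt{s}\,\|v\|_2$, and use $\|b_\theta-b_{\thetal}\|_T\le\|b_{\thetal}-b_{\theta_0}\|_T+\|b_\theta-b_{\theta_0}\|_T$. Substituting into the cone inequality produces a scalar quadratic inequality of the form $a^2\le b^2+\tfrac{3\lambda\sqrt{s}}{k}(a+b)$ with $a=\|b_{\thetal}-b_{\theta_0}\|_T$ and $b=\|b_\theta-b_{\theta_0}\|_T$; a suitable application of Young's inequality to absorb the cross terms then delivers \eqref{basicineq}, with the additive term taking the stated form $4(\gamma+2)^2 s\lambda^2/(\gamma k^2)$. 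Once the control of $G$ in the second paragraph is in place, everything here is the standard Lasso cone/restricted-eigenvalue scheme, so I do not anticipate further difficulties beyond bookkeeping of constants.
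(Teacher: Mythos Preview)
Your proposal is correct and follows essentially the same scheme as the paper: bound $2G(\theta,\thetal)\le\lambda\|v\|_1$ on $\mathcal T$ via the mean value theorem (you use the integral form with Fubini, which is in fact cleaner than the paper's pointwise version), obtain the cone inequality, split into the two cases according to the size of $\|b_\theta-b_{\theta_0}\|_T^2$ relative to $4\lambda\|v_{|S}\|_1/\gamma$, and in the nontrivial case invoke $\mathcal T'$ together with Cauchy--Schwarz and Young's inequality $2xy\le x^2/\beta+\beta y^2$ with $\beta=\gamma/(\gamma+2)$. Your bookkeeping actually yields the slightly sharper coefficient $3$ in place of the paper's $4$, so the stated bound \eqref{basicineq} follows a fortiori.
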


\begin{proof}
 Applying mean value theorem we conclude that there exists a measurable $\te' \in \R^p$ such that 
\begin{align}
|G(\theta, \thetal)| &= \left| \left \lan \frac{1}{T} \int_0^T \dot b_{\te'}(X_t)^{\star} dW_t ,
\te -  \thetal \right \ran \right| \\[1.5 ex]
&\leq \left \| \sup_{\te \in \Te} \frac{1}{T} \int_0^T \dot b_{\te}(X_t)^{\star} dW_t  \right \|_{\infty}
\|\te -  \thetal \|_1.
\end{align}
On the set $\mathcal{T}$ we conclude from the basic inequality  \eqref{basic} that 
\bee
\|b_{\thetal} - b_{\theta_0} \|_{T}^2 + \lambda \| \thetal -\theta  \|_1 \leq 
\|b_{\theta} - b_{\theta_0} \|_{T}^2+2 \lambda ( \| \theta \|_1 - \| \thetal \|_1+\| \thetal -\theta  \|_1).
\eee
\noindent Let $\supptheta=\text{supp }\theta$. Then 
\bee\label{ineqsupp}
\| \theta \|_1 - \| \thetal \|_1+\| \thetal -\theta  \|_1 \leq 2 \| \thetal |_{\supptheta}-\theta \|_1 
\eee
and hence
\bee\label{ineqsupp2}
\|b_{\thetal} - b_{\theta_0} \|_{T}^2 + \lambda \| \thetal -\theta  \|_1 \leq \|b_{\theta} - b_{\theta_0} \|_{T}^2+ 4 \lambda \| \thetal |_{\supptheta}-\theta \|_1. 
\eee
Now if $4 \lambda \| \thetal |_{\supptheta}-\theta \|_1\leq \gamma \|b_{\theta} - b_{\theta_0} \|_{T}^2$ then $\|b_{\thetal} - b_{\theta_0} \|_{T}^2\leq (1+\gamma)\|b_{\theta} - b_{\theta_0} \|_{T}^2$. Next, we consider the case
\bee
4 \lambda \| \thetal |_{\supptheta}-\theta \|_1> \gamma \|b_{\theta} - b_{\theta_0} \|_{T}^2. 
\eee
In this setting, due to \eqref{ineqsupp2} we get $\thetal - \theta \in \mathcal{C}(s,3+4/\gamma)$ and by Cauchy-Schwarz inequality 
\bee\label{ineqsupp3}
\|b_{\thetal} - b_{\theta_0} \|_{T}^2 + \lambda \| \thetal -\theta  \|_1 \leq \|b_{\theta} - b_{\theta_0} \|_{T}^2+ 4 \lambda \sqrt{s} \| \thetal -\theta \|_2. 
\eee
On $\mathcal{T}'$ we obtain via \eqref{ineqsupp3}:
\bee\label{ineqsupp4}
\|b_{\thetal} - b_{\theta_0} \|_{T}^2 \leq \|b_{\theta} - b_{\theta_0} \|_{T}^2+ 4 \lambda \frac{\sqrt{s} }{k} \big( \|b_{\thetal} - b_{\theta_0} \|_{T}+\|b_{\theta} - b_{\theta_0} \|_{T} \big). 
\eee
Using the inequality 
\bee
2xy\leq \frac{x^2}{a}+ay^2
\eee
with $x=\lambda \sqrt{s_0}$, $y=\|b_{\thetal} - b_{\theta_0} \|_{T}$ or 
$y=\|b_{\theta} - b_{\theta_0} \|_{T}$ and $a=\gamma/(\gamma+2)$ we finally obtain that 
\bee
\|b_{\thetal} - b_{\theta_0} \|_{T}^2 \leq (1+\gamma) \|b_{\theta} - b_{\theta_0} \|_{T}^2 +
4 \frac{(\gamma+2)^2 s \lambda^2}{\gamma k^2},
\eee
which completes the proof of Theorem \ref{mainth}.
\end{proof}

\begin{rem} \label{rem3.3} \rm
In linear models discussed in Examples \eqref{ex1}(i) and (ii) the mean value theorem used in the previous proof is trivial, and hence the uniformity in $\te \in \Te$ in the definition of the sets $\mathcal{T}$ and $\mathcal{T}'$ is not required. In this setting it is easier to control the probabilities $\P(\mathcal{T})$ and $\P(\mathcal{T}')$ (cf. \cite{CMP20,GM19}). In the general framework of the model \eqref{DP} we need more advanced techniques from empirical processes theory. \qed
\end{rem}

\noindent
Applying the oracle inequality \eqref{basicineq} to $\te=\te_0$ we immediately conclude that 
\bee \label{estTdis}
\|b_{\thetal} - b_{\theta_0} \|_{T}^2 \leq 
4 \frac{(\gamma+2)^2 s_0 \lambda^2}{\gamma k^2}
\eee
and this statement can be used to obtain bounds on $l_1$ and $l_2$ norms. Indeed, on $\mathcal{T}'$ it holds that 
$k \|  \thetal - \theta_0\|_2 \leq \|b_{\thetal} - b_{\theta_0} \|_{T}$ and we deduce the bound
\bee \label{l2bound}
\|  \thetal - \theta_0\|_2^2 \leq 
4 \frac{(\gamma+2)^2 s_0 \lambda^2}{\gamma k^4}
\eee
On the other hand, since $\thetal - \theta_0 \in \mathcal{C}(s_0,3+4/\gamma)$ we obtain that 
\bee \label{l1bound}
\|  \thetal - \theta_0\|_1 \leq \frac{4(\ga+1)}{\ga}   \sqrt{s_0} \|  \thetal - \theta_0\|_2 \leq \frac{8(\ga+1)(\ga+2) s_0 \la}{\ga^{3/2} k^2}. 
\eee
by applying Cauchy-Schwarz inequality and \eqref{l2bound}.

In the next section we will determine an upper bound for the probabilities $\P(\mathcal{T})$ and $\P(\mathcal{T}')$ with the help of uniform concentration inequalities.

\section{Concentration bounds} \label{sec4}
\setcounter{equation}{0}
\renewcommand{\theequation}{\thesection.\arabic{equation}}
In this section we review some concentration bounds for linear functionals of the diffusion process $X$ defined in \eqref{DP}. We will apply these theoretical  results to control the probabilities of the sets $\mathcal{T}$ and $\mathcal{T}'$.

\subsection{Concentration bounds for linear functionals of the SDE} \label{sec4.1}

We first consider the random set $\mathcal{T}'$ and ignore the uniformity for the moment. From the definition of the norm $\|\cdot\|_T$ it is obvious 
that we require concentration bounds for functionals of the form
\bee \label{linfun}
\frac{1}{T} \int_0^{T} f(X_t) dt
\eee 
for functions $f:\R^d \to \R$. Similarly,  the quadratic variation of the martingale term $\int_0^T \dot b_{\te}(X_t)^{\star} dW_t$ in the definition of $\mathcal{T}$ 
has the form \eqref{linfun}.  In view of Bernstein type inequalities for martingales, which crucially depend on the quadratic variation, it is evident that deviation bounds for linear functionals of $X$ are crucial to obtain approximations of $\P(\mathcal{T})$ and $\P(\mathcal{T}')$. 
We now present an exponential type bound for Lipschitz functions $f:\R^d \to \R$, which will be one of the key tools to assess the probabilities $\P(\mathcal{T})$ and $\P(\mathcal{T}')$.
 
 \begin{theo}[Theorem 3 and Section 3 in \cite{Sa12}] \label{expbound}
Assume that conditions  \eqref{bcond2} are satisfied and let $f: \R^d \to \R$ be a Lipschitz function. Then there exists a constant $C>0$,
independent of $d$,
such that 
\bee \label{concbound}
\E_{\te}\left[ \exp\left( \frac{\mu}{T} \int_0^T (f(X_t) - \E_{\te}[f(X_0)])dt \right) \right] \leq \exp \left(C \mu^2 \|f\|_{\text{\rm Lip}}^2/T \right)
\eee
for all $\mu \in \R$.
\end{theo}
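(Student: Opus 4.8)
The plan is to represent the centred time average as a martingale plus a negligible boundary term, via the Poisson equation associated to \eqref{DP}, and then exploit a dimension-free gradient estimate coming directly from condition \eqref{bcond2}. Write $g:=f-\E_\te[f(X_0)]$ and let $L:=-b_\te^\star\nabla+\tfrac12\De$ denote the generator of \eqref{DP}. Since \eqref{bcond2} forces exponential ergodicity, the function
\[
u(x):=\int_0^\infty \E_x[g(X_t)]\,dt=\int_0^\infty\big(\E_x[f(X_t)]-\E_\te[f(X_0)]\big)\,dt
\]
is well defined and solves $Lu=-g$. Applying It\^o's formula then yields
\[
\frac1T\int_0^T g(X_t)\,dt=\frac1T\big(u(X_0)-u(X_T)\big)+\frac1T\int_0^T\nabla u(X_t)^\star\,dW_t,
\]
so the problem reduces to bounding the exponential moment of the stochastic integral together with controlling the boundary contribution.

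The key step is a uniform bound on $\nabla u$. Using a synchronous coupling, that is, solving \eqref{DP} from two initial points $x,y$ driven by the same Brownian motion, condition \eqref{bcond2} gives $\tfrac{d}{dt}\|X_t^x-X_t^y\|^2\le-2M\|X_t^x-X_t^y\|^2$, hence $\|X_t^x-X_t^y\|\le e^{-Mt}\|x-y\|$ almost surely. Consequently $x\mapsto\E_x[f(X_t)]$ is Lipschitz with constant $e^{-Mt}\|f\|_{\text{Lip}}$, and integrating over $t$ gives the dimension-free estimate $\|\nabla u\|_\infty\le\|f\|_{\text{Lip}}/M$. This immediately bounds the quadratic variation of the martingale $N_t:=\frac{\mu}{T}\int_0^t\nabla u(X_s)^\star\,dW_s$ by $\langle N\rangle_T\le\mu^2\|f\|_{\text{Lip}}^2/(M^2T)$ almost surely. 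With a deterministic bound on $\langle N\rangle_T$ I would write $\exp(N_T)=\mathcal E(N)_T\exp(\tfrac12\langle N\rangle_T)$, observe that $\mathcal E(N)$ is a genuine martingale (Novikov's condition holds trivially since the quadratic variation is bounded), and take expectations to obtain $\E_\te[\exp(N_T)]\le\exp\!\big(\mu^2\|f\|_{\text{Lip}}^2/(2M^2T)\big)$, which is exactly the target form.

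It remains to absorb the boundary term $\frac{\mu}{T}(u(X_0)-u(X_T))$. Since $u$ is Lipschitz with constant $\|f\|_{\text{Lip}}/M$ and the stationary law inherits sub-Gaussian tails from the dissipativity \eqref{bcond2} (a Lyapunov argument for $\|x\|^2$ based on the same contraction), its exponential moment is finite and, carrying the prefactor $1/T$, contributes only at order $\exp(C\mu^2/T^2)$, which for $T\ge1$ is dominated by the main term. A Cauchy--Schwarz splitting $\E[\exp(A+B)]\le\E[\exp(2A)]^{1/2}\E[\exp(2B)]^{1/2}$ of the martingale factor $A$ and the boundary factor $B$ makes this precise (at the cost of harmless constants), and the construction is manifestly symmetric in $\mu$, so the bound holds for all $\mu\in\R$.

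The main obstacle I anticipate is twofold and purely technical. First, the Poisson solution $u$ need not be $C^2$ when $f$ is merely Lipschitz, so a mollification argument is required to justify It\^o's formula: one smooths $f$, derives every estimate with constants depending only on $\|f\|_{\text{Lip}}$ and $M$, and passes to the limit, using that the non-degenerate semigroup $P_t$ is smoothing for $t>0$ while $\|\nabla P_tg\|_\infty\le e^{-Mt}\|f\|_{\text{Lip}}$ remains integrable in $t$. Second, one must verify that the boundary term is negligible \emph{uniformly in $d$}, which hinges on the dimension-free tail bound for the invariant measure. Both the gradient estimate and the tail bound trace back to the single contraction inequality $\|X_t^x-X_t^y\|\le e^{-Mt}\|x-y\|$, and it is precisely this coupling estimate that keeps every constant independent of the dimension $d$.
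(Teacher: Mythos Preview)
The paper does not actually prove this statement: Theorem~\ref{expbound} is quoted from \cite{Sa12} and used as a black box. So there is no ``paper's own proof'' to match. That said, it is worth comparing your route to the one behind the citation.

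Your argument via the Poisson equation is sound in spirit and the key ingredients are correct: the synchronous-coupling contraction $\|X_t^x-X_t^y\|\le e^{-Mt}\|x-y\|$ does follow from \eqref{bcond2}, gives the dimension-free bound $\|\nabla u\|_\infty\le\|f\|_{\text{Lip}}/M$, and the exponential-martingale step then yields the main term $\exp(\mu^2\|f\|_{\text{Lip}}^2/(2M^2T))$. The mollification issue you flag is real but routine. The approach in \cite{Sa12}, by contrast, is the transportation-inequality route alluded to in Section~\ref{sec4.1}: one shows that the law of the whole path $(X_t)_{t\in[0,T]}$ satisfies a $T_2(c)$ inequality on $C([0,T];\R^d)$ with respect to the $L^2$-metric, with $c$ depending only on $M$; since $F(x)=T^{-1}\int_0^T f(x_t)\,dt$ is Lipschitz with constant $\|f\|_{\text{Lip}}/\sqrt{T}$, Bobkov--G\"otze gives \eqref{concbound} directly, for \emph{all} $T>0$.

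This last point is where your argument leaves a small gap. Your boundary term $\tfrac{\mu}{T}(u(X_0)-u(X_T))$ contributes $\exp(C'\mu^2\|f\|_{\text{Lip}}^2/T^2)$, and you absorb it only under the side condition $T\ge 1$. The theorem as stated carries no restriction on $T$, and the transportation proof does not need one. For the paper's purposes this is harmless---every application has $T$ large---but if you want the statement exactly as written, you should either adopt the $T_2$ argument on path space or observe that the same coupling yields $T_2$ for the stationary law with a constant of order $1/M$, so that after Cauchy--Schwarz the combined bound is $\exp\big(C\mu^2\|f\|_{\text{Lip}}^2(T^{-1}+T^{-2})\big)$, and then note that this is the form one actually needs downstream.
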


\noindent
Theorem \ref{expbound} is a basic building block for uniform bounds investigated in the next section.
\begin{rem} \label{remExp} \rm
Concentration bounds for functionals of the form  \eqref{linfun} have been investigated in several papers, see e.g. \cite{GGW14,GLWY09,Varvenne} among others. It is important to note that there are no minimal assumptions on $f$ and $X$ under which such concentration bounds hold. Rather there is a certain 
trade-off between assumptions on $f$ and the diffusion model $X$. Here we give a short overview about the theory, mainly following the exposition of \cite{GGW14}. In the setting of model \eqref{DP} concentration inequalities usually take the form
\bee \label{conbound}
\P_{\te}\left( \frac{1}{T} \int_0^{T} \left( f(X_t) - \E_{\te}[f(X_0)] \right)dt >\mu \right) \leq \exp\left(- \frac{T\mu^2}{a_1+a_2\mu} \right)
\eee
where the constants $a_1,a_2>0$ depend on the properties of $f$ and $X$.  Apart from the Lipschitz case presented in Thoerem  \ref{expbound}, which gives sub-Gaussian bounds, concentration bound of type  \eqref{conbound} can be obtained 
under the conditions that (i) $X$ is a symmetric Markov process and the function $f$ is bounded,
(ii) $X$ is a symmetric Markov process that satisfies a (local) Poincar{\'e} inequality and $f$ fulfils certain growth and integrability conditions (see also the recent work \cite{AST22} for new deviation bounds). We remark that in the aforementioned settings the concentration bounds are not sub-Gaussian. Hence, the main results of our paper  (Theorem \ref{controlT} and Corollary \ref{maincor}) are not easily obtained under a different set of conditions on $X$ and $f$. 
 \qed
\end{rem}

\subsection{Generic chaining and control of the probabilities $\P(\mathcal{T})$ and $\P(\mathcal{T}')$} \label{sec4.2}

Here we apply the exponential bound introduced  in Theorem \ref{expbound} to obtain upper bounds for the probabilities $\P(\mathcal{T})$ and $\P(\mathcal{T}')$. We will use the generic chaining method, which has been introduced by Talagrand, cf. \cite{Talagrand2005}. We recall the basic notions of the theory.  Let $\text{d}$ be a distance measure on the parameter set $\Te$
and assume that $\max_{\theta,\vartheta \in \Theta} \text{d}(\theta,\vartheta)$ is finite. 
The diameter of a set $A \subset \Te$ with respect to $\text{d}$ is defined as
\bee
\De(A)=\De(A, \text{d}):= \max_{\theta,\vartheta \in A} \text{d}(\theta,\vartheta)<\infty.
\eee
We call an admissible sequence an increasing sequence $(\mathcal{A}_n)_{n\in \N}$ of partitions of $\Theta$ such that  $\text{card } \mathcal{A}_0=1$ and $\text{card } \mathcal{A}_n\leq 2^{2^n}$ for $n \geq 1$. Furthermore, we introduce the entropy numbers ($\al>0$)
\bee
\gamma_{\alpha}(\Theta,d)=\inf \sup_{\theta \in \Theta} \sum_{n\geq 0} 2^{n/\alpha} \Delta(A_n(\theta)), 
\eee
\noindent where the infimum is taken over all admissible sequences and $A_n(\theta)$ denotes a the unique element of $\mathcal{A}_n$ which contains $\theta$. The generic chaining approach is a tool to obtain \textit{uniform} probability bounds from non-uniform ones. To sketch  ideas we demonstrate a result from \cite{D15}. Consider a one-dimensional family $\{Z_{\theta}:~\te \in \Te\}$ and assume that the inequality
\bee
\P(|Z_{\theta} - Z_{\vartheta}|> u \text{d}(\theta,\vartheta)) \leq 2 \exp(-u^{\al})
\eee
holds for some  $\al>0$. Then, according to \cite[Theorem 3.2]{D15}, there exist constants $a_{\al},b_{\al}>0$ such that the uniform probability bound
\bee
\P\left( \sup_{\te\in \Te} |Z_{\theta} - Z_{\theta_0}|>a_{\al} \gamma_{\alpha}(\Theta,d) + ub_{\al}
\Delta(\Te)\right) \leq \exp(-u^{\al}/\al)
\eee 
holds for any $u\geq 1$. We will use this type of uniform bounds to treat the sets $\mathcal{T}$
and $\mathcal{T}'$.

We introduce two distance measures on $\Te$ that are related to definitions of $\mathcal{T}$
and $\mathcal{T}'$ as well as impose a set of assumptions. \\

\noindent 
($\mathcal{B}$) (a) We denote by 
$\dot b_{\te,j} $ the $j$th column vector of the matrix $\dot b_{\te} $, $j=1,\ldots,p$. The functions $f_{\te,\vartheta}^j(\cdot):= 
\|\dot b_{\te,j}(\cdot) - \dot b_{\vartheta,j}(\cdot)\|^2_2: \R^d \to \R$ and $ (\dot b_\theta \dot b_\theta^{\star})_{ij}(\cdot): \R^d \to \R$ are assumed to be Lipschitz continuous. \\ \\
(b)  Define the matrix
\bee \label{defM}
\mathcal{M}_\text{Lip}(\theta,\vartheta):= 
\left( \| ((\dot b_\theta \dot b_\theta^{\star})_{ij}-(\dot b_{\vartheta} \dot b_{\vartheta}^{\star})_{ij})(\cdot) \|_{\text{Lip}}
\right)_{i,j=1}^p. 
\eee
We introduce two distance measures on the parameter set $\Te$ 
\begin{align*}
\label{defd}
d_1(\theta,\vartheta)&:=\max_{j=1,...,p} \left(\E_{\te_0} \left[ \| \dot b_{\te,j}(X_0) - \dot b_{\vartheta,j}(X_0)\|^2 \right] + \| f_{\theta,\vartheta}^j \|_{\text{\rm Lip}} \right)^{\frac 12} \\[1.5 ex]
d_2(\theta,\vartheta)& := \|\mathcal{M}_\text{Lip}(\theta,\vartheta)\|_{\text{op}}
\end{align*}
and assume that 
\bee
\De_1(\Te)=\De(\Te, d_1) < \infty \qquad \text{and} \qquad \De_2(\Te)=\De(\Te, d_2) < \infty.
\eee
(c) Let $l_{\text{min}}(\te)$ denote the smallest  eigenvalue of the Fisher information matrix $I(\te)$. We assume that 
\bee \label{deffrakl}
l_{\text{min}}:= \inf_{\te \in \Te} l_{\text{min}}(\te)>0.
\eee
\newline

\noindent
The distance measures $d_1$ and $d_2$ will appear to be crucial to obtain upper bounds 
for the probabilities $\P(\mathcal{T})$ and $\P(\mathcal{T}')$ via the generic chaining method. 
We also remark that the assumption 
($\mathcal{B}$)(c) is stronger than the identifiability assumption in ($\mathcal{A}$)(c). 

\begin{rem} \label{remdistance} \rm
While the distance measures $d_1$ and $d_2$ are rather unusual, they can be upper bounded 
by more classical ones. Consider the additional assumption
\bee \label{otherbounds}
r_1:=\sup_{\te \in \Te, x\in \R^d} \max_{i,j} |\dot{b}_{\te,ij}(x)| <\infty, \quad
 r_2:=\sup_{\te \in \Te} \max_{i,j} \|\dot{b}_{\te,ij}\|_{\text{Lip}} <\infty.
\eee
We then conclude that
\bee
d_1(\theta,\vartheta) \leq \sqrt{2r_1} \max_{j=1,...,p} \left( \sum_{i=1}^d \|\dot b_{\te,ji} - 
\dot b_{\vartheta,ji}\|_{\infty} + \sum_{i=1}^d \|\dot b_{\te,ji} - 
\dot b_{\vartheta,ji}\|_{\text{\rm Lip}} \right)^{\frac 12}. 
\eee
Similarly, using the estimate $\|\mathcal{M}_\text{Lip}(\theta,\vartheta)\|_{\text{op}}\leq \|\mathcal{M}_\text{Lip}(\theta,\vartheta)\|_{\text{F}}$
where $\|\cdot\|_{\text{F}}$ denotes the Frobenius norm, we deduce the inequality
\bee
d_2(\theta,\vartheta) \leq p \times \max_{j,l=1,...,p} \mathcal{M}_\text{Lip}(\theta,\vartheta)_{jl}
\leq 2p r_1 \times  \max_{j=1,...,p} \left(  \sum_{i=1}^d \|\dot b_{\te,ji} - 
\dot b_{\vartheta,ji}\|_{\text{\rm Lip}}   \right).
\eee
In particular, it holds that 
\bee \label{deltaest}
\De_1(\Te) \leq K_{r_1,r_2} d^{1/2} \qquad \text{and} \qquad \De_2(\Te) \leq K_{r_1,r_2} d p
\eee
for a constant $K_{r_1,r_2}>0$. 
\qed
\end{rem}

\noindent
To control the probabilities of the sets $\mathcal{T}$ and $\mathcal{T}'$ 
we define the constants
\begin{align} 
\lambda_1&:=  4L\Delta_1(\Theta)\max{\left( \sqrt{\frac{\log{(2Lp)}+\log{(2/\epsilon)}}{2T}}, \left(\frac{\frac{(2C)^{1/3}}{6}\left(\log{(2Lp)+\log{(2/\epsilon)}}  \right)}{T}  \right)^{3/4} \right)}
\nonumber \\[1.5 ex]
&+ \frac{4L\gamma_{4/3}(\Theta,d_1)}{\sqrt{T}}, \label{lambda1}
\end{align}
and
\bee \label{defT1}
T_1:= \frac{2592(c_0+2)^4L^2 C}{l_{\text{min}}^2}
\left(  
\Delta_2(\Theta)
\sqrt{\log{\left( 21^{2s_0}\left( p^{2s_0} \wedge \frac{ep}{2s_0}  \right)^{2s_0} \right)}+\log{\frac{L}{\epsilon_0}}}
+\gamma_2(\Theta,d_2)
\right)^2.
\eee
where $\epsilon>0$ is a given number, $L>0$ is defined in \eqref{defL} and $C$ has been introduced in Theorem \ref{expbound}. The main result of this section is the following theorem.

\begin{theo} \label{controlT}
Assume that conditions  \eqref{bcond2}, ($\mathcal{B}$) are satisfied and fix $\ep \in (0,1)$. Then, for all $T\geq \frac{2C}{27}$ and $\lambda\geq \lambda_1$,
it holds that 
$$\P(\mathcal{T})\geq 1- \ep.$$ 
Furthermore, for any $T\geq T_1$ and $k=\sqrt{l_{\text{min}}/2}$, we obtain the bound 
$$\P(\mathcal{T}')\geq 1- \ep.$$
\end{theo}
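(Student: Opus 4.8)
The plan is to handle the two events separately, in each case combining the sub-Gaussian concentration of Theorem \ref{expbound} with the generic chaining estimate \cite[Theorem 3.2]{D15}. The only structural difference is the nature of the random object: for $\mathcal{T}$ it is a family of stochastic integrals (a martingale in the time variable), whereas for $\mathcal{T}'$ it is a family of time-averages of Lipschitz functions. This difference is exactly what produces the two different tail exponents, and hence the two entropy functionals $\gamma_{4/3}(\Theta,d_1)$ and $\gamma_2(\Theta,d_2)$.

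\textbf{Control of $\P(\mathcal{T})$.} For each coordinate $j\in\{1,\ldots,p\}$ I would study the centred family $\te\mapsto Z_\te^j:=\frac{1}{T}\int_0^T \dot b_{\te,j}(X_t)^\star\,dW_t$. Its increment $Z_\te^j-Z_\vartheta^j$ is, in the time variable, a continuous martingale $N$ with quadratic variation $\langle N\rangle_T=\frac{1}{T^2}\int_0^T f_{\te,\vartheta}^j(X_t)\,dt$, where $f_{\te,\vartheta}^j=\|\dot b_{\te,j}-\dot b_{\vartheta,j}\|^2$. The first step is an increment tail bound, obtained from the Bernstein splitting $\P(|N_T|>x)\le \P(|N_T|>x,\,\langle N\rangle_T\le v)+\P(\langle N\rangle_T> v)$: the first term is controlled by the Gaussian martingale tail $2\exp(-x^2/2v)$, and the second by applying Theorem \ref{expbound} to the Lipschitz function $f_{\te,\vartheta}^j$, whose mean $\E_{\te_0}[f_{\te,\vartheta}^j(X_0)]$ and Lipschitz constant $\|f_{\te,\vartheta}^j\|_{\text{Lip}}$ are precisely the two ingredients entering $d_1(\te,\vartheta)$. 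Optimising $v$ yields a mixed tail that is sub-Gaussian for small deviations and decays like $\exp(-c\,T\eta^{4/3}/\|f_{\te,\vartheta}^j\|_{\text{Lip}}^{2/3})$ for large deviations $\eta$; after normalising by $d_1(\te,\vartheta)/\sqrt{T}$ this is the origin of the exponent $4/3$. Inserting this increment bound into \cite[Theorem 3.2]{D15} then controls $\sup_{\te\in\Te}|Z_\te^j|$ by an entropy contribution of order $\gamma_{4/3}(\Theta,d_1)/\sqrt{T}$ plus a diameter-level deviation term, the two alternatives in the maximum defining $\lambda_1$ being the inversions of the sub-Gaussian and the $4/3$ regimes (this is where the different powers of $T$ and the constant $(2C)^{1/3}/6$ enter, and the mild requirement $T\ge 2C/27$ ensures the two regimes combine into the stated expression). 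A union bound over $j=1,\ldots,p$, absorbing the structural constant $L$ of \eqref{defL}, supplies the factor $\log(2Lp)+\log(2/\ep)$, and requiring the resulting uniform threshold to be at most $\lambda/2$ is exactly the condition $\lambda\ge\lambda_1$, giving $\P(\mathcal{T})\ge 1-\ep$.

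\textbf{Control of $\P(\mathcal{T}')$.} Here the plan is to reduce the restricted eigenvalue condition to a uniform closeness statement for the empirical Fisher information $\widehat I_T(\te):=\frac{1}{T}\int_0^T (\dot b_\te\dot b_\te^\star)(X_t)\,dt$. By the mean value theorem applied to $b_\theta-b_\vartheta$ (as in the proof of Theorem \ref{mainth}) one may write $\|b_\theta-b_\vartheta\|_T^2=v^\star\widehat I_T(\te')\,v$ with $v=\theta-\vartheta$ and a path-dependent intermediate point $\te'$; this is exactly the place where uniformity in $\te$ is indispensable. I would therefore bound, uniformly in $\te$ and on sparse directions, the deviation of $\widehat I_T(\te)$ from $I(\te)$. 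Each entry $(\widehat I_T(\te)-I(\te))_{ij}=\frac{1}{T}\int_0^T\big((\dot b_\te\dot b_\te^\star)_{ij}-\E_{\te_0}[(\dot b_\te\dot b_\te^\star)_{ij}(X_0)]\big)\,dt$ is a centred time-average of a Lipschitz function, so Theorem \ref{expbound} provides a genuine sub-Gaussian (exponent $2$) increment bound with respect to the operator-norm distance $d_2$ built from $\mathcal{M}_{\text{Lip}}$; generic chaining then yields the entropy term $\gamma_2(\Theta,d_2)$ together with the diameter $\Delta_2(\Theta)$. The reduction from the cone $\mathcal{C}(s,3+4/\gamma)$ to $2s_0$-sparse vectors, combined with an $\ep$-net over the unit sphere in each $2s_0$-dimensional coordinate subspace, accounts for the combinatorial factor inside $T_1$. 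On the resulting good event the restricted quadratic form $v^\star\widehat I_T(\te')v$ stays bounded below in terms of $l_{\text{min}}$ (via ($\mathcal{B}$)(c) together with the uniform control of $\widehat I_T-I$), and after the standard cone manipulation this delivers $\|b_\theta-b_\vartheta\|_T\ge k\|v\|_2$ with the stated value $k=l_{\text{min}}/2$; demanding the uniform deviation of $\widehat I_T$ to remain below a fixed multiple of $l_{\text{min}}$ is what forces $T\ge T_1$, the factor $l_{\text{min}}^{2}$ in the denominator of $T_1$ reflecting the square of this tolerance.

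\textbf{Main obstacle.} I expect the delicate point to be the increment inequality underlying $\mathcal{T}$: coupling the martingale Bernstein bound with the concentration of its quadratic variation, and tracking the optimisation carefully enough to isolate the two regimes and the explicit constants (the $(2C)^{1/3}/6$ factor and the $3/4$ exponent) that appear in $\lambda_1$. By contrast, the analysis of $\mathcal{T}'$, although bookkeeping-heavy because of the cone reduction and the union over supports, rests only on the clean sub-Gaussian tail of Theorem \ref{expbound} and is structurally more routine.
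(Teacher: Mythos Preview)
Your plan is correct and matches the paper's architecture closely: for $\mathcal{T}$, an increment tail bound for the martingale family $Z_\te^j$, generic chaining, then a union bound over $j=1,\ldots,p$; for $\mathcal{T}'$, sub-Gaussian concentration of the empirical quadratic form $u^\star\widehat I_T(\te)u$ via Theorem \ref{expbound}, generic chaining in $\te$ (yielding $\gamma_2(\Theta,d_2)$), and the cone-to-$2s_0$-sparse reduction (the paper invokes Lemmas F.1 and F.3 of \cite{BM15} for this step, which is exactly your $\varepsilon$-net argument over coordinate subspaces).

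The one technical difference worth noting is how the martingale increment tail is derived. Instead of your Bernstein splitting $\P(|N_T|>x)\le\P(|N_T|>x,\langle N\rangle_T\le v)+\P(\langle N\rangle_T>v)$, the paper bounds the moment generating function directly: writing $\E[\exp(\mu N_T)]=\E[\exp(\mu N_T-\mu^2\langle N\rangle_T)\exp(\mu^2\langle N\rangle_T)]$, applying Cauchy--Schwarz and the exponential-martingale identity to the first factor, and Theorem \ref{expbound} to the second, one obtains $\E[\exp(\mu N_T)]\le\exp(\mu^2 d_1^2(\te,\vartheta)+\tfrac{2C}{T}\mu^4 d_1^4(\te,\vartheta))$; optimising over $\mu$ in two ranges then gives the same two-regime tail (sub-Gaussian for small $u$, exponent $4/3$ for large $u$) that you describe. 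Both routes are standard and lead to the same constants, so your proposal would go through without difficulty; the paper's MGF approach is perhaps slightly cleaner for tracking the explicit constant $(2C)^{1/3}$ that enters $\lambda_1$.
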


\begin{proof}
See Section \ref{sec6}.
\end{proof}

\begin{rem} \rm
Here we present a class of drift functions $b_{\te}$ that satisfies conditions \eqref{bcond2} and  ($\mathcal{B}$)(a) and (b). In particular, this class will cover the type of functions discussed in
Example  \ref{ex1}(ii) and (iii). We consider drift functions of the form 
\[
b_{\te} (x) = \phi_1(x) + \phi_2(\te,x),
\]
where $\phi_1 \in C^1(\R^d; \R^d)$ and $\phi_2 \in C^{1,1}(\Te \times\R^d; \R^d)$. We
assume that $\phi_1$ satisfies the condition
\[
(\phi_1(x)-\phi_1(y))^{\star}(x-y) \geq 2M \| x-y \|^2 
\]
and
\[
\sup_{\te\in \Te}|(\phi_2(\te,x)-\phi_2(\te,y))^{\star}(x-y)|\leq M \| x-y \|^2.
\]
Under these assumptions we conclude that the condition \eqref{bcond2} is automatically satisfied. Furthermore, when assumption \eqref{otherbounds} is fulfilled for the function 
$\phi_2$, conditions ($\mathcal{B}$)(a) and (b) are also satisfied as noted in Remark 
\ref{remdistance}.

We note that the Ornstein-Uhlenbeck model discussed in Example  \ref{ex1}(i) does not satisfy our assumptions as the function $\dot b_\theta \dot b_\theta^{\star}$ is quadratic in $x$ and hence not Lipschitz continuous. However, in this particular case Malliavin techniques have been applied in \cite{CMP20} to obtain the necessary concentration inequalities.  
\qed
\end{rem}

\section{Error bounds for the Lasso estimator} \label{sec5}
\setcounter{equation}{0}
\renewcommand{\theequation}{\thesection.\arabic{equation}}

Now we are ready to demonstrate the error bounds for the Lasso estimator $\thetal $. 

\begin{cor} \label{maincor}
Let $\|\te_0\|_0=s_0$. Assume that conditions  \eqref{bcond2}, ($\mathcal{B}$) are satisfied and fix $\ep \in (0,1)$. If $T\geq \max\{2C/27,T_1\}$ and $\lambda\geq \lambda_1$ we obtain with 
probability at least $1-2\ep$: 
\begin{align}
\| \thetal - \theta_0\|_2^2 &\leq 
 \frac{64(\gamma+2)^2 s_0 \lambda^2}{\gamma l_{\text{min}}^2}, \\[1.5 ex]
\|  \thetal - \theta_0\|_1& \leq  \frac{32(\ga+1)(\ga+2) s_0 \la}{\ga^{3/2} l_{\text{min}}}, \\[1.5 ex]
\| \thetal\|_0 & \leq \frac{64\mathfrak{M}_\infty(\ga+1)(\ga+2)}{\ga^{3/2} l_{\text{min}}} s_0, 
\end{align}

\noindent where $\mathfrak{M}_\infty= \| \sup_{\theta,\vartheta \in \Theta } \frac{1}{T}\int_0^T \dot b_\theta^{\star}(X_t) b_\vartheta(X_t) dt\|_\infty $. In particular, if $\log(p)/T \to 0$ we deduce that 
\begin{align}
\| \thetal - \theta_0\|_2 &= O_{\P}\left(\frac{\sqrt{s_0}}{l_{\text{min}}} \left(\De_1(\Te) \sqrt{\frac{\log(p)}{T}} + \frac{\ga_{4/3}(\Te,d_1)}{\sqrt{T}} \right) \right), \\[1.5 ex]
\|  \thetal - \theta_0\|_1& = O_{\P}\left(\frac{s_0}{l_{\text{min}}} \left(\De_1(\Te) \sqrt{\frac{\log(p)}{T}} + \frac{\ga_{4/3}(\Te,d_1)}{\sqrt{T}} \right) \right).
\end{align}
\end{cor}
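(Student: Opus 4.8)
The plan is to combine the deterministic oracle bounds obtained after Theorem~\ref{mainth}---namely \eqref{l2bound} and \eqref{l1bound}---with the probabilistic control of the sets $\mathcal{T}$ and $\mathcal{T}'$ from Theorem~\ref{controlT}, supplemented by a first-order (KKT) argument for the sparsity estimate. Everything takes place on the event $\mathcal{T}\cap\mathcal{T}'$ with the choice $k=l_{\text{min}}/2$ prescribed by Theorem~\ref{controlT}. Indeed, for $T\geq\max\{2C/27,T_1\}$ and $\lambda\geq\lambda_1$ that theorem gives $\P(\mathcal{T})\geq 1-\ep$ and $\P(\mathcal{T}')\geq 1-\ep$, so a union bound yields $\P(\mathcal{T}\cap\mathcal{T}')\geq 1-2\ep$; every deterministic inequality proved on $\mathcal{T}\cap\mathcal{T}'$ then holds with probability at least $1-2\ep$.

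First I would dispatch the $l_2$ and $l_1$ bounds, which require no new work. On $\mathcal{T}\cap\mathcal{T}'$ inequality \eqref{l2bound} reads $\|\thetal-\theta_0\|_2^2\leq 4(\gamma+2)^2 s_0\lambda^2/(\gamma k^4)$, and substituting $k=l_{\text{min}}/2$, i.e. $k^4=l_{\text{min}}^4/16$, gives the first claim. In the same way \eqref{l1bound} with $k^2=l_{\text{min}}^2/4$ produces the second claim.

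The sparsity estimate is the genuinely new ingredient and the step I expect to be the main obstacle, since it cannot be read off from the oracle inequality. I would start from the first-order optimality condition for the convex program \eqref{lasso}. Writing the $j$th component of the gradient of $\lll_T$ as $\frac1T\int_0^T\dot b_{\theta,j}^{\star}(X_t)\,dW_t+\lan \dot b_{\theta,j},b_\theta-b_{\theta_0}\ran_T$ (using $dX_t=-b_{\theta_0}(X_t)dt+dW_t$), the subgradient condition forces the absolute value of this quantity to equal $\lambda$ for every $j$ in the support $S$ of $\thetal$. On $\mathcal{T}$ the martingale term is bounded by $\lambda/2$ uniformly in $j$, so $|\lan\dot b_{\thetal,j},b_{\thetal}-b_{\theta_0}\ran_T|\geq\lambda/2$ for each $j\in S$. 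Summing over $S$ and pairing against the sign pattern of these inner products rewrites $\sum_{j\in S}|\lan\dot b_{\thetal,j},b_{\thetal}-b_{\theta_0}\ran_T|$ as a single bilinear functional built from $\frac1T\int_0^T\dot b_{\thetal}^{\star}(b_{\thetal}-b_{\theta_0})\,dt$; bounding it by Hölder's inequality in terms of the uniform quantity $\mathfrak{M}_\infty$ and the $l_1$-error gives $\tfrac{\lambda}{2}\|\thetal\|_0\leq\mathfrak{M}_\infty\|\thetal-\theta_0\|_1$, and inserting the $l_1$ bound already established yields the third claim. The delicate point is to carry out this summation so that the right-hand side scales with $\|\thetal-\theta_0\|_1$ rather than with $\|\thetal\|_0$ (otherwise the estimate is vacuous); this is precisely where the definition of $\mathfrak{M}_\infty$ as a uniform control of the bilinear form $\dot b_\theta^{\star}b_\vartheta$ enters.

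Finally, for the asymptotic statements I would fix $\ep\in(0,1)$, take $\lambda=\lambda_1$, and read the rate off \eqref{lambda1}. When $\log(p)/T\to 0$ the term of order $((\cdots)/T)^{3/4}$ in \eqref{lambda1} is negligible compared with $\sqrt{(\log(2Lp)+\log(2/\ep))/(2T)}$, so the maximum is attained by the square-root term and $\lambda_1\asymp\Delta_1(\Theta)\sqrt{\log(p)/T}+\gamma_{4/3}(\Theta,d_1)/\sqrt{T}$ up to the fixed constant $L$. Since the first two bounds give $\|\thetal-\theta_0\|_2=O(\sqrt{s_0}\,\lambda)$ and $\|\thetal-\theta_0\|_1=O(s_0\lambda)$, substituting this rate for $\lambda$ produces the stated $O_{\P}$ expressions; the $O_{\P}$ qualifier is justified because for every fixed $\ep$ the bounds hold with probability at least $1-2\ep$ once $T\geq\max\{2C/27,T_1\}$, which is eventually satisfied under $\log(p)/T\to 0$.
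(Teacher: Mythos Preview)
Your proposal is correct and follows essentially the same route as the paper: the $l_2$ and $l_1$ bounds are read off from \eqref{l2bound}--\eqref{l1bound} with $k=l_{\text{min}}/2$ and combined with Theorem~\ref{controlT} via a union bound, and the sparsity estimate is obtained from the KKT condition by using the $\lambda/2$ control on the martingale part (from $\mathcal{T}$) together with the bound $\mathfrak{M}_\infty\|\thetal-\theta_0\|_1$ on the bilinear term, exactly as in the paper's proof. Your asymptotic step (choosing $\lambda=\lambda_1$ and observing that the square-root term dominates the $3/4$-power term in \eqref{lambda1} when $\log(p)/T\to 0$) spells out what the paper leaves implicit.
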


\begin{proof}

First two inequalities follow from the bounds \eqref{estTdis}-\eqref{l1bound} and the statement of Theorem \ref{controlT}, so we just need to show the last statement. We notice that necessary and sufficient condition for $\thetal$ to be the solution of the optimisation problem \eqref{lasso} is the existence of a vector $\vartheta$ in subdifferential of target function that satisfies

\bee
\frac{1}{T}\int_0^T \dot b_{\thetal}^{\star}(X_t)dW_t+ \frac{1}{T} \int_0^T \dot b_{\thetal}^{\star}(X_t)(b_{\thetal}(X_t)-b_{\theta_0}(X_t))dt+\lambda \vartheta=0. 
\eee

\noindent
Furthermore $\thetal^i\neq 0$ implies that $\vartheta^i=\text{sign }{\thetal^i}$ and we conclude that 

\begin{align*}
\mathfrak{M}_\infty \|  \thetal - \theta_0\|_1 \geq \| \frac{1}{T} \int_0^T \dot b_{\thetal}^{\star}(X_t)(b_{\thetal}(X_t)-b_{\theta_0}(X_t))dt \|_1 &= \| \frac{1}{T}\int_0^T \dot b_{\thetal}^{\star}(X_t)dW_t + \vartheta \|_1  \\
\geq \sum_{i: \thetal^i\neq 0} |\la \vartheta^i-|(\frac{1}{T}\int_0^T \dot b_{\thetal}^{\star}(X_t)dW_t)^i||&\geq \| \thetal \|_0 \frac{\la}{2}. 
\end{align*}
This completes the proof of Corollary \ref{maincor}.
\end{proof}

\noindent
The quantity $\ga_{4/3}(\Te,d_1)$ appearing in Corollary  \ref{maincor} is hard to compute explicitly, but it can be upper bounded using \textit{covering numbers}. To demonstrate ideas we introduce the parametric class 
\[
F:=\left\{\dot b_{\te}:~\te \in \Te \right\}
\] 
and denote by $N(F,d_1,\ep)$ the covering number of $F$ with respect to $d_1$-distance, i.e. the smallest number of balls of radius $\ep$ needed to cover the set $F$. Then one can always approximate
\[
\ga_{\al}(\Te,d_1) \leq c_{\al} \int_{0}^{\infty} \left( \log N(F,d_1,\ep) \right)^{1/\al} d\ep,
\]
for some positive constant $c_{\al}$, see e.g. \cite{Talagrand2005}. The order of covering numbers has been computed for numerous examples and these studies can be applied to control 
the entropy constant $\ga_{4/3}(\Te,d_1)$. 

Assume, for instance, that $\Te \subset \R^p$ is a bounded set, in which case $N(\Te,\|\cdot\|_2,\ep) \sim \ep^{-p}$. When the map $\te \mapsto \dot b_{\te}$ is Lipschitz with Lipschitz constant $R>0$, then we deduce the inequality 
$N(F,d_1,\ep) \leq N(\Te,\|\cdot\|_2,\ep/R)$. In this case we obtain
\bee \label{gammaex}
\ga_{4/3}(\Te,d_1) \leq c_{4/3,R}~ p^{3/4}. 
\eee
This bound may be far from optimal, but it can be useful if $p\ll d$.

On the other hand, when the dimension $d$ of the process $X$ is constant (or significantly smaller than $p$), the covering number $N(F,d_1,\ep)$ can be approximated ignoring the dependence on the parameter space $\Te$.  To demonstrate ideas we make the additional assumption that
$X_t \in Q$ for all $t\geq 0$ $\P$-almost surely and $\text{supp}(\dot b_{\te})
\subset Q$ for all $\te \in \Te$, where $Q\subset \R^d$ is a compact set, and $\dot b_{\te}(\cdot) \in C^{\be}_{Q}(\R^d; \R^d)$ for some $\be>1$ (class of $\be$-smooth functions with support $Q$).   As observed in Remark \ref{remdistance}, we have 
$d_1(\theta,\vartheta) \leq d'(\dot b_{\te}, \dot b_{\vartheta})$ where 
\[
d'(\dot b_{\te}, \dot b_{\vartheta}):=\max_{j=1,...,p} \left( \sum_{i=1}^d \|\dot b_{\te,ji} - 
\dot b_{\vartheta,ji}\|_{\infty} + \sum_{i=1}^d \|\dot b_{\te,ji} - 
\dot b_{\vartheta,ji}\|_{\text{\rm Lip}} \right)^{\frac 12}
\]
Now, we can use the estimate
\[
N(F,d_1,\ep) \leq N(C^{\be}_{Q}(\R^d; \R^d),d',\ep), 
\]
which effectively ignores the parametric structure of the set $F$. The latter covering number can be computed in a similar fashion as demonstrated  
in \cite[Section 2.7]{VW96}, and thus we may obtain an explicit upper bound for the quantity $\ga_{4/3}(\Te,d_1)$.

In the next proposition we derive a lower bound for the estimation problem.

\begin{lem}[Lower bound]
Assume that $\Te$ contains an open ball centred at $0\in \R^p$ and 
\bee
l_{max} := \sup_{\te,\vartheta \in \Te}  \left\|\E_{\te}[\dot b_{\vartheta}(X_0) \dot b_{\vartheta}(X_0)^{\star}] \right\|_{\text{\rm op}}<\infty.
\eee
Then it holds that 
\bee
\inf_{\widehat{\te}} \sup_{\te \in \Te:~\|\te\|_0=s_0} \E_{\te}[\|\widehat{\te}-\te\|_2] \geq c' 
\sqrt{\frac{s_0}{l_{max}T}}
\eee
for some $c'>0$. 
\end{lem}

\begin{proof}
Recall that $\P_{\theta}^T$ denotes  the law of the process \eqref{DP} with parameter $\theta$ restricted to $\f_{T}$. We will now apply \cite[Theorem 2.2]{T09}. For this purpose,  assume 
that $\te_1,\te_2 \in \{0,\ep\}^p$ are two parameters with $\|\te_1\|_0 = \|\te_2\|_0=s_0$ such that 
\[
\|\te_1-\te_2\|_2 \geq c s_0 \ep
\]
for some $c\in (0,1)$. We approximate the Kullback-Leibler divergence $K(\P_{\theta_1}^T|
\P_{\theta_2}^T)$ as follows:
\begin{align}
K(\P_{\theta_1}^T|
\P_{\theta_2}^T) &\leq \frac{T}{2} \E_{\te_1}\left[\left|\|b_{\te_1}(X_0)\|_2^2 - \|b_{\te_2}(X_0)\|_2^2
\right| \right] \\
&\leq \frac{l_{max} T \|\te_1-\te_2\|^2}{2} \leq l_{max}Ts_0\ep^2. 
\end{align}
Choosing $\ep^2=O((l_{max}Ts_0)^{-1})$ and applying \cite[Theorem 2.2]{T09}, we obtain the desired statement. 
\end{proof}

\noindent
We emphasise the dependence of the lower bound on $l_{max}$, which can grow in the dimension of the model $d$. This dependence is rather intuitive as the higher dimensionality of the model should usually improve the convergence rate. From this perspective it is interesting to compare the lower bound with convergence rates obtained in Corollary \ref{maincor}. For this purpose assume that 
\bee \label{condonlim}
c_1 d\leq l_{\min} \leq l_{\max} \leq c_2 d,
\eee   
which holds, for instance, for the independent case (see example below).  In this setting, when we use the estimates \eqref{deltaest} and   \eqref{gammaex}, we deduce that 
\bee 
\|\thetal - \theta_0\|_2 = O_{\P}\left(\sqrt{\frac{s_0\log(p)}{dT}} + 
p^{3/4} \sqrt{\frac{s_0}{d^2 T}}  \right).
\eee
If $p^{3/4} \ll d^{1/2}$ the first term is dominating and it matches the lower bound up to $\log(p)$
factor. 

\begin{ex} \rm
While the rates of convergence for the estimator $\thetal$ may be suboptimal in some cases, we give simple examples where the rates are nearly optimal. \\
(i) (Linear case) As mentioned in Remark \ref{rem3.3} in linear drift models of Example \ref{ex1}(ii) uniform bounds are not required and hence the quantities $\Delta_{j}(\Te)$ and $\ga_{\al}(\Te, d_{j})$.,
$j=1,2$, can be replaced by $1$. In this case, under condition \eqref{condonlim},  the bounds simplify to  
\[
\| \thetal - \theta_0\|_2 = O_{\P} \left( \sqrt{s_0 \frac{\log(p)}{dT}} \right)
\qquad \text{and} \qquad
\| \thetal - \theta_0\|_1 = O_{\P} \left( s_0\sqrt{ \frac{\log(p)}{dT}} \right).
\] 
These are rather classical bounds, which match the lower bounds up to 
$\log(p)$ factor. \\
(ii) (Independent case) Here we consider the special case of independent particles. More specifically, we assume that the drift function has the form $b_{\theta}(x)= 
(g_{\theta}(x_1),\ldots, g_{\theta}(x_d))^{\star}$, where  $g_{\theta}:\R \to \R$ is a given function. In the case there is no interaction between diffusion particles and hence they are independent and identically distributed. If we assume that the individual information matrix 
\[
I_1(\theta):= \E_{\te}[\dot g_{\te}(X_0) \dot g_{\te}(X_0)^{\star}]
\] 
is invertible,  we deduce that $I(\te)=d I_1(\theta)$ and condition \eqref{condonlim} is satisfied.
 \qed  
\end{ex}

\begin{rem}[Adaptive Lasso] \rm
For the purpose of model selection it is a common practice to consider the \textit{adaptive Lasso}. If we denote by $\widehat{\te}_{\text{o}}$ an initial Lasso estimator of $\te_0$, the adaptive Lasso is defined as
\[
\widehat{\te}_{\text{adap}}= \argmin_{\theta \in \R^p}\left( \mathcal{L}_T(\theta) + \lambda \sum_{j=1}^{p} \frac{\te_j}{|\widehat{\te}_{\text{o}}|^{\al}}\right)
\] 
for some $\al>0$. The support recovery property for adaptive Lasso estimator has been shown in \cite{GM19} for the setting of Ornstein-Uhlenbeck models 
with \textit{fixed} $d$ and $p$; we also refer to \cite{PI14} for the study of \textit{linear} drift models. However, showing this property for general drift functions and 
large $d$ and $p$ is not a trivial task, and we leave it for future research.    \qed
\end{rem}

\section{Numerical experiments} \label{secNE}

Our Lasso estimator is based on continuous observations of the underlying process, which have to be discretised for numerical simulations. We will use time of observation $T=20$ with 100 discretisation points over each unit interval, and $d=10$. Further refinement of the grid does not lead to a significant improvement of simulation, and consequently such approximation is sufficient for our purposes. 

For $(x,k)\in \mathbb{R}^2$ we introduce the  function

\bee
h(x,k)=\begin{cases}
               x^2 \sin{\frac{k}{x}}, \text{ if }x\neq 0\\
               0, \text{ if }x=0
            \end{cases}
\eee

\noindent and for a positive-definite $d\times d$-matrix $A$ we define a function $b_A:\R^d\to\R^d$ as

\bee
b_A^{(i)}(x)=\sum_{j=1}^d h(x_j,A_{ij}), \text{ where }x\in \R^d \text{ and }i\in\{1,...,d\}.  
\eee

\noindent
As an example, we are going to consider a diffusion process \eqref{DP} with drift function $b_{A_0}$ with some sparse parameter $\theta_0=\text{vect}(A_0)$. We set sparsity of $\theta_0$ at 35\%.   

The selection of a tuning parameter $\lambda$ is usually performed by cross-validation algorithm. We use 80\% of our observations as training set and last 20\% as validation set: 

\bee
\la_0=\argmin_{\la >0} \mathcal{L}_{[0.8T,T]}\hat{A}_L(\la), 
\eee

\noindent where 

\bee
\hat{A}_L(\la)=\argmin_{A\in R^{d \times d}} ( \mathcal{L}_{[0,0.8T]}(A)+\la \|A\|_1). 
\eee

\noindent
Figure 1 below demonstrates an example of the parameter matrix $A_0\in \R^{d\times d}$ and the corresponding maximum likelihood and Lasso estimators. We use a colour code to highlight the sparsity of given matrices. We can see that performance 
 of Lasso estimator, especially in terms of support recovery, is much superior to that one of MLE, even for a relatively small dimension of true parameter. 

\begin{figure}[!h]
\centering
\subfloat[True parameter]{
  \includegraphics[width=65mm]{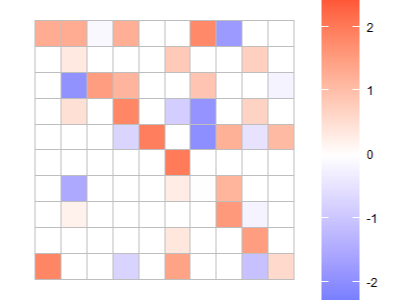}
}
\hspace{6pt}
\subfloat[MLE]{
  \includegraphics[width=65mm]{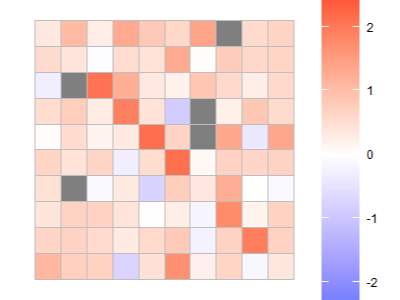}
}
\subfloat[Lasso]{
  \includegraphics[width=65mm]{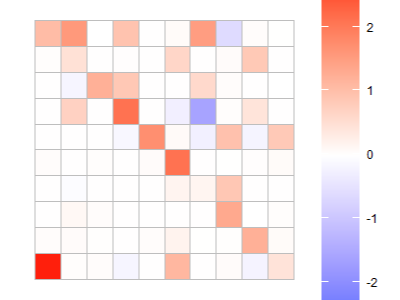}
}

\caption{Comparison of the sparsed true parameter matrix with maximum likelihood and Lasso estimators.}
\label{Figure1}
\end{figure}

\section{Proof of Theorem \ref{controlT}} \label{sec6}
\setcounter{equation}{0}
\renewcommand{\theequation}{\thesection.\arabic{equation}}

We divide the proof of Theorem \ref{controlT} into several steps.

\subsection{Treatment of the set $\mathcal{T}$} \label{sec6.1}

We consider  univariate martingale
terms
\bee \label{martin}
M_T^j (\te) := \frac{1}{\sqrt{T}} \int_0^T \big( \dot b_{\te,j}(X_t))^{\star} dW_t, \qquad j=1,\ldots,p.
\eee 
The first result determines bounds for exponential moments of $M_T^j (\te)$.
\begin{prop} \label{Propexp}
Assume that conditions  \eqref{bcond2}, ($\mathcal{B}$) are satisfied. 
Then it holds that
\bee \label{expboundpq}
\E_{\te_0}\left[ \exp\left( \mu \left( M_T^j (\te) -M_T^j (\vartheta) \right) \right) \right]
\leq \exp\left(\mu^2 d_1^2(\theta,\vartheta)
+\frac{2C  }{T } \mu^4 d_1^4(\theta,\vartheta) \right)
\eee
for any $\mu \in \R$.
\end{prop}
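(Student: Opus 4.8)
The plan is to recognize $M_T^j(\te)-M_T^j(\vartheta)$ as a single continuous martingale and then trade its exponential moment for an exponential moment of its quadratic variation, to which the Lipschitz concentration bound of Theorem \ref{expbound} applies. Set $g_t:=\dot b_{\te,j}(X_t)-\dot b_{\vartheta,j}(X_t)\in\R^d$, so that
\[
M_T^j(\te)-M_T^j(\vartheta)=\frac{1}{\sqrt{T}}\int_0^T g_t^{\star}\,dW_t,
\]
a continuous martingale with quadratic variation $\frac1T\int_0^T\|g_t\|^2\,dt$. Crucially, $\|g_t\|^2=f_{\te,\vartheta}^j(X_t)$, and by assumption ($\mathcal B$)(a) the function $f_{\te,\vartheta}^j$ is Lipschitz, which is exactly what Theorem \ref{expbound} requires.

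Writing $\nu:=\mu/\sqrt{T}$, the first step is to decouple the martingale from its compensator by inserting and removing the Doléans--Dade correction and applying the Cauchy--Schwarz inequality:
\[
\E_{\te_0}\!\left[e^{\nu\int_0^T g_t^{\star}dW_t}\right]\leq
\left(\E_{\te_0}\!\left[e^{2\nu\int_0^T g_t^{\star}dW_t-2\nu^2\int_0^T\|g_t\|^2dt}\right]\right)^{1/2}
\left(\E_{\te_0}\!\left[e^{2\nu^2\int_0^T\|g_t\|^2dt}\right]\right)^{1/2}.
\]
The first factor is the square root of the expectation of the stochastic exponential associated with the integrand $2\nu g_t$; being a nonnegative local martingale it is a supermartingale, so its expectation is at most $1$. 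This is the key point, and I expect it to be the only delicate step: it lets us discard the martingale factor \emph{without} verifying any Novikov-type integrability condition on the possibly unbounded integrand $g_t$, reducing everything to the quadratic-variation term.

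It then remains to bound $\E_{\te_0}[\exp(2\nu^2\int_0^T\|g_t\|^2dt)]$. Substituting $\nu^2=\mu^2/T$ and centering by stationarity,
\[
\frac{2\mu^2}{T}\int_0^T f_{\te,\vartheta}^j(X_t)\,dt
=2\mu^2\,\E_{\te_0}[f_{\te,\vartheta}^j(X_0)]
+\frac{2\mu^2}{T}\int_0^T\!\big(f_{\te,\vartheta}^j(X_t)-\E_{\te_0}[f_{\te,\vartheta}^j(X_0)]\big)\,dt,
\]
I would apply Theorem \ref{expbound} to the Lipschitz function $f_{\te,\vartheta}^j$ with parameter $2\mu^2$ in place of $\mu$, which controls the centered exponential by $\exp\!\big(4C\mu^4\|f_{\te,\vartheta}^j\|_{\mathrm{Lip}}^2/T\big)$. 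Collecting both factors and taking the square root yields
\[
\E_{\te_0}\!\left[e^{\mu(M_T^j(\te)-M_T^j(\vartheta))}\right]
\leq\exp\!\left(\mu^2\,\E_{\te_0}[f_{\te,\vartheta}^j(X_0)]+\frac{2C\mu^4\|f_{\te,\vartheta}^j\|_{\mathrm{Lip}}^2}{T}\right).
\]
Finally, the definition of $d_1$ gives $\E_{\te_0}[f_{\te,\vartheta}^j(X_0)]+\|f_{\te,\vartheta}^j\|_{\mathrm{Lip}}\leq d_1^2(\theta,\vartheta)$, and since both summands are nonnegative each is bounded by $d_1^2(\theta,\vartheta)$, so in particular $\|f_{\te,\vartheta}^j\|_{\mathrm{Lip}}^2\leq d_1^4(\theta,\vartheta)$. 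Substituting these two bounds produces exactly \eqref{expboundpq}.
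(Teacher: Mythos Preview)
Your proposal is correct and follows essentially the same route as the paper: split the exponential via Cauchy--Schwarz into a Dol\'eans--Dade exponential (with expectation at most $1$) and an exponential of the quadratic variation, then control the latter by centering and invoking Theorem \ref{expbound} with parameter $2\mu^2$, finally absorbing both the mean and the Lipschitz constant into $d_1^2(\theta,\vartheta)$. Your remark that the stochastic exponential is only a supermartingale (so one gets $\leq 1$ without checking Novikov) is in fact slightly more careful than the paper, which asserts equality.
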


\begin{proof}
We first recall that due to the property of an exponential martingale we have that 
\[
\E_{\te_0}\left[a(M_T^j (\te) -M_T^j (\vartheta)) - \frac{a^2}{2} \lan M^j (\te) -M^j (\vartheta) \ran_T \right] =1
\]
for any $a\in \R$. Next, using Cauchy-Schwarz inequality, we deduce that 
\begin{align}
\E_{\te_0}\left[ \exp\left( \mu \left( M_T^j (\te) -M_T^j (\vartheta) \right) \right) \right] 
&= \E_{\te_0}\left[ \exp\left( \mu \left( M_T^j (\te) - M_T^j (\vartheta)\right) 
- \mu^2\lan M^j (\te) -M^j (\vartheta) \ran_T \right) \right. \\
& \times \left.
\exp\left( \mu^2 \lan M^j (\te) -M^j (\vartheta) \ran_T \right)
\right] \\
&\leq \E_{\te_0}\left[ \exp\left( 2\mu^2 \lan M^j (\te) -M^j (\vartheta) \ran_T  \right)  \right]^{1/2}.
\end{align} 
Hence, we conclude from Theorem \ref{expbound} that 
\[
\E_{\te_0}\left[ \exp\left( \mu \left( M_T^j (\te) -M_T^j (\vartheta) \right) \right) \right] \leq \exp\left(\mu^2 d_1^2(\theta,\vartheta)
+\frac{2C  }{T } \mu^4 d_1^4(\theta,\vartheta) \right),
\]
which completes the proof. 
\end{proof}

\noindent
An immediate consequence of Proposition \ref{Propexp} is the following concentration inequality.

\begin{cor} \label{concM}
Assume that conditions  \eqref{bcond2}, ($\mathcal{B}$) are satisfied. 
Then we obtain the inequality
\bee \label{nonunbound}
\P_{\te_0} \left( | M_T^j (\te) -M_T^j (\vartheta)| > u d_1(\theta,\vartheta) \right) \leq 
  \begin{cases}
      2 \exp{\left( - \frac{u^2}{32} \right)}, \quad  0< u \leq 8 \sqrt{\frac{T}{2C}}\\
      2 \exp{\left( - \frac{3}{8} \frac{1}{(2C)^{1/3}} T^{1/3} u^{4/3} \right)}, \quad  u > 8 \sqrt{\frac{T}{2C}}
    \end{cases}. 
\eee
\end{cor}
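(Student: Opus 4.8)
The plan is to deduce the two–sided tail bound from the exponential moment estimate of Proposition \ref{Propexp} by a Chernoff argument, optimising the free parameter separately in the two ranges of $u$. First I would normalise: writing $Z:=\big(M_T^j(\te)-M_T^j(\vartheta)\big)/d_1(\theta,\vartheta)$ and setting $\nu:=\mu\, d_1(\theta,\vartheta)$, the bound \eqref{expboundpq} reads $\E_{\te_0}[\exp(\nu Z)]\le \exp\big(\nu^2+\tfrac{2C}{T}\nu^4\big)$, valid for every $\nu\in\R$. By the exponential Markov inequality, for any $\nu>0$,
\[
\P_{\te_0}(Z>u)\le \exp\Big(-\nu u+\nu^2+\tfrac{2C}{T}\nu^4\Big),
\]
so the whole task reduces to choosing $\nu$ as a function of $u$ and $T$ that drives this exponent below the stated thresholds.

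The exponent is governed by the quadratic term when $u$ is small and by the quartic term when $u$ is large, so I would use the single choice $\nu=\min\{u/8,\ \tfrac12 (uT/(2C))^{1/3}\}$, whose two branches cross exactly at $u=8\sqrt{T/(2C)}$. In the range $0<u\le 8\sqrt{T/(2C)}$ one takes $\nu=u/8$; here the defining constraint $\tfrac{2C}{T}u^2\le 64$ lets one bound the quartic contribution by $u^2/64$, and a direct computation gives an exponent $\le -3u^2/32\le -u^2/32$, which is the first branch of \eqref{nonunbound}. In the complementary range $u>8\sqrt{T/(2C)}$ one takes $\nu=\tfrac12 (uT/(2C))^{1/3}$; the $-\nu u$ and $\tfrac{2C}{T}\nu^4$ terms then both scale like $u^{4/3}(T/(2C))^{1/3}$ and combine to $-\tfrac{7}{16}$ of that order, while the leftover quadratic term $\nu^2$ is of lower order and is absorbed using the reverse constraint $\tfrac{2C}{T}u^2>64$ (which gives $(u^2\tfrac{2C}{T})^{-1/3}<1/4$), producing the exponent $-\tfrac38 (2C)^{-1/3}T^{1/3}u^{4/3}$, i.e. the second branch.

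Finally, since the right–hand side of \eqref{expboundpq} is even in $\mu$, the identical estimate holds for $-Z$, hence for the lower tail $\P_{\te_0}(Z<-u)$; adding the two one–sided bounds supplies the factor $2$ in front of each exponential and yields \eqref{nonunbound}. No probabilistic input beyond Proposition \ref{Propexp} is required. The routine but delicate part of the argument is the bookkeeping of constants at the crossover $u=8\sqrt{T/(2C)}$: one must verify that the chosen $\nu$ keeps the quartic (respectively quadratic) remainder below the relevant fraction of the leading term in each regime, and this is precisely where the numerical constants $1/32$ and $3/8$, together with the crossover value $8\sqrt{T/(2C)}$, originate.
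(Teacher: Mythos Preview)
Your proof is correct and follows the same Chernoff strategy as the paper, deriving the tail bound from Proposition~\ref{Propexp} by choosing the free parameter differently in the quadratic- and quartic-dominated regimes. The only notable difference is cosmetic: the paper first optimises on the ranges $\nu\le\sqrt{T/(2C)}$ and $\nu>\sqrt{T/(2C)}$ to obtain bounds valid for $u\le 4\sqrt{T/(2C)}$ and $u\ge 8\sqrt{T/(2C)}$ respectively, and then rescales the first bound (replacing $u$ by $u/2$, which weakens $-u^2/8$ to $-u^2/32$) to bridge the gap $4\sqrt{T/(2C)}<u<8\sqrt{T/(2C)}$; your single choice $\nu=\min\{u/8,\tfrac12(uT/(2C))^{1/3}\}$ is engineered so that the two branches meet exactly at the threshold, which avoids this patching step and makes the origin of the constant $1/32$ slightly more transparent.
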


\begin{proof}
Due to Chebyshev's inequality and \eqref{expboundpq} we can bound 
\begin{equation}\label{ChebyshevIneq}
\P_{\te_0} \left( | M_T^j (\te) -M_T^j (\vartheta)| > u d_1(\theta,\vartheta) \right) \leq \exp \left( \mu^2 d_1^2(\theta,\vartheta) + \frac{2C}{T} \mu^4 d_1^4(\theta,\vartheta) - u \mu d_1(\theta,\vartheta) \right) 
\end{equation}
for all $\mu>0$. In particular, for $0<\mu\leq\sqrt{\frac{T}{2C}}\frac{1}{d_1(\theta,\vartheta)}$ the minimum of right side of \eqref{ChebyshevIneq} is obtained for $\mu=\frac{u}{4d_1(\theta,\vartheta)}$ and, consequently, 
\begin{equation}\label{IneqSmallu}
\P_{\te_0} \left( | M_T^j (\te) -M_T^j (\vartheta)| > u d_1(\theta,\vartheta) \right) \leq \exp \left( -\frac{u^2}{8} \right), \quad \text{if} \quad u\leq 4 \sqrt{\frac{T}{2C}}. 
\end{equation}
Similarly, for $\mu > \sqrt{\frac{T}{2C}}\frac{1}{d_1(\theta,\vartheta)}$ the minimum of right side of \eqref{ChebyshevIneq} is obtained for $\mu=\left( \frac{T}{16C}\frac{u}{d_1^3(\theta,\vartheta)} \right)^{1/3}$ and 
 \begin{equation}\label{IneqLargeu}
\P_{\te_0} \left( | M_T^j (\te) -M_T^j (\vartheta)| > u d_1(\theta,\vartheta) \right) \leq \exp{\left( - \frac{3}{8} \frac{1}{(2C)^{1/3}} T^{1/3} u^{4/3} \right)}, \quad \text{if} \quad u\geq 8 \sqrt{\frac{T}{2C}}. 
\end{equation} 
 Finally, rescaling \eqref{IneqSmallu} for $u\in \left( 4 \sqrt{\frac{T}{2C}}, 8 \sqrt{\frac{T}{2C}} \right)$ and combining it with \eqref{IneqLargeu} finishes the proof.
 \end{proof}

\noindent
We now use the generic chaining method to obtain a uniform concentration inequality for the martingale part via Corollary \ref{concM}.

\begin{prop} \label{realfinallem}
Assume that conditions  \eqref{bcond2}, ($\mathcal{B}$) are satisfied. 
For all $T\geq \frac{2C}{27}$ we deduce the inequality  
\begin{gather}
\P_{\te_0}\bigg(  \sup_{\theta\in\Theta}  | M_T^j (\te) -M_T^j (\te_0)| \geq L \left( \gamma_{4/3}(\Theta,d_1)+u \Delta_1(\Theta) \right) \bigg)\\
\leq L \exp{\left( -2 u^2  \right)}+L\exp{\left( -\frac{6}{(2C)^{1/3}} T^{1/3}u^{4/3}  \right)}, 
\end{gather}
\noindent where $L>0$ is a constant independent of $p$, $d$ and $T$. 
\end{prop}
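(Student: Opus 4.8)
The plan is to run Talagrand's generic chaining on the centred process $\te \mapsto M_T^j(\te)-M_T^j(\te_0)$, whose increments are controlled by Corollary \ref{concM}. The only genuine difference from the single-tail statement of \cite[Theorem 3.2]{D15} recalled above is that the increment bound is of \emph{mixed type}: it is sub-Gaussian (tail exponent $\alpha=2$) for $0<u\leq 8\sqrt{T/(2C)}$ and sub-Weibull (tail exponent $\alpha=4/3$) for $u>8\sqrt{T/(2C)}$. A single invocation of \cite[Theorem 3.2]{D15} is therefore unavailable, and I would instead carry out the chaining directly, keeping track of both regimes at every scale.

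First I would fix an admissible sequence $(\mathcal{A}_n)_{n\geq0}$ of $\Te$ that is near-optimal for $\gamma_{4/3}(\Theta,d_1)$, with $\mathcal{A}_0=\{\Te\}$ and the single point of the coarsest partition taken to be $\te_0$. Writing $\pi_n(\te)$ for a fixed representative of the cell $A_n(\te)\in\mathcal{A}_n$ containing $\te$, I would use the telescoping decomposition $M_T^j(\te)-M_T^j(\te_0)=\sum_{n\geq1}\big(M_T^j(\pi_n(\te))-M_T^j(\pi_{n-1}(\te))\big)$, valid by continuity of $\te\mapsto M_T^j(\te)$. At level $n$ there are at most $\mathrm{card}(\mathcal{A}_n)\,\mathrm{card}(\mathcal{A}_{n-1})\leq 2^{2^{n+1}}$ distinct links, and each link has $d_1$-length at most $\Delta(A_{n-1}(\te),d_1)$.

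Next I would assign to level $n$ a threshold of the form $\Delta(A_{n-1}(\te),d_1)\,u_n$, apply Corollary \ref{concM} to each link, and union-bound over the $2^{2^{n+1}}$ links. In the sub-Gaussian branch, beating the cardinality forces $u_n\gtrsim 2^{n/2}$, so after summing the chaining contribution is $\sum_n 2^{n/2}\Delta(A_{n-1}(\te),d_1)\leq\sum_n 2^{3n/4}\Delta(A_{n-1}(\te),d_1)\lesssim\gamma_{4/3}(\Theta,d_1)$, the middle inequality being the termwise bound $2^{n/2}\leq 2^{3n/4}$ (equivalently $\gamma_2(\Theta,d_1)\leq\gamma_{4/3}(\Theta,d_1)$). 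In the sub-Weibull branch one needs $u_n\gtrsim 2^{3n/4}/T^{1/4}$, which yields $\sum_n 2^{3n/4}\Delta(A_{n-1}(\te),d_1)/T^{1/4}\lesssim\gamma_{4/3}(\Theta,d_1)/T^{1/4}$, an even smaller term. Both collapse into a single $L\,\gamma_{4/3}(\Theta,d_1)$. The deviation allowance at the top scale is set to $8u\,\Delta_1(\Te)$; substituting $8u$ into the two branches of Corollary \ref{concM} gives exactly $\exp(-(8u)^2/32)=\exp(-2u^2)$ and $\exp\big(-\tfrac{3}{8(2C)^{1/3}}T^{1/3}(8u)^{4/3}\big)=\exp\big(-\tfrac{6}{(2C)^{1/3}}T^{1/3}u^{4/3}\big)$, which are the two stated tails once the constant $L$ has absorbed the factors $2$ and the convergent geometric sums. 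The hypothesis $T\geq \tfrac{2C}{27}$ is what places the crossover point $u=8\sqrt{T/(2C)}$ consistently with the chosen $u_n$, so that these geometric series converge with constants independent of $p$, $d$ and $T$.

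The hard part will be the bookkeeping of the two regimes inside the chaining: for each $n$ one must decide whether the level-$n$ threshold falls in the sub-Gaussian or the sub-Weibull part of Corollary \ref{concM}, check that the cardinality factor $2^{2^{n+1}}$ is absorbed in both cases, and arrange the crossover so that the overall deviation probability splits cleanly into the two exponentials. Ensuring throughout that the emerging constant $L$ remains dimension-free — which is indispensable for the subsequent control of $\P(\mathcal{T})$ and $\P(\mathcal{T}')$ — is the delicate point of the argument.
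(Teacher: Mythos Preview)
Your plan is correct and follows essentially the same route as the paper: generic chaining applied to $\te\mapsto M_T^j(\te)-M_T^j(\te_0)$ using the mixed-tail increment bound of Corollary \ref{concM}, with $\gamma_{4/3}(\Theta,d_1)$ emerging as the chaining cost. Two execution differences are worth noting. First, the paper sidesteps entirely what you call ``the hard part'' (deciding at each level which regime of Corollary \ref{concM} applies): it takes a \emph{single} threshold $8(2^{3n/4}+u)$ at every scale and, regardless of which case of Corollary \ref{concM} is active, upper-bounds the tail by the \emph{sum} of both exponentials; then the elementary inequality $(a+b)^{\alpha}\geq a^{\alpha}+b^{\alpha}$ for $\alpha\geq 1$ splits each exponent into a cardinality-killing part and a $u$-part, yielding the two stated tails directly. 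This avoids all regime-tracking; the hypothesis $T\geq 2C/27$ is precisely what makes the sub-Weibull series $\sum_n\exp(2^n\log 2-\tfrac{6}{(2C)^{1/3}}T^{1/3}2^n)$ converge. Second, on the good event the paper passes to the uniform bound via the adaptive indices $n(\vartheta,q)$ of \cite[Theorem 2.2.27]{Talagrand2014} rather than the straight telescope $\sum_n(M_T^j(\pi_n)-M_T^j(\pi_{n-1}))$; both are valid and lead to the same $\gamma_{4/3}$ bound up to constants.
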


\begin{proof}
We apply the generic chaining technique to obtain uniform bounds. Consider an admissible sequence $(\mathcal{A}_n)$ of partitions such that 
\bee
\sup_{\theta \in \Theta} \sum_{n\geq 0} 2^{3n/4} \Delta(A_n(\theta),d_1)\leq 2\gamma_{4/3}(\Theta,d_1). 
\eee
For each $n\geq 0$ consider a set $\Theta_n$ with $\text{card }{\Theta_n}\leq 2^{2^n}$ such that every element $A$ of $\mathcal{A}_n$ meets $\Theta_n$ and define $\Psi_n=\cup_{i\leq n} \Theta_n$. For $u>0$ consider the random event   
\begin{gather*}
\Omega(u)= \big\{ \forall n \geq 1, \quad \forall \theta, \vartheta \in \Psi_n , \quad |M_T^j (\te) -M_T^j (\vartheta)|\leq 8(2^{3n/4}+u)d_1(\theta,\vartheta) \big\}. 
\end{gather*}
Then, using the union bound, we can estimate 
\begin{gather}
\P_{\te_0} (\Omega^c(u))\leq \sum_{n\geq 1} (\text{card } \Psi_n)^2 \P \left( |M_T^j (\te) -M_T^j (\vartheta)| > 8(2^{3n/4}+u)d_1(\theta,\vartheta) \right) \\
\leq 8 \sum_{n\geq 1} \exp{\left( 2^n \log{2}-2^{3n/2+1} \right)}\exp{\left( -2u^2 \right)} \\
+ 8 \sum_{n\geq 1} \exp{\left( 2^n \log{2}-\frac{6}{(2C)^{1/3}} T^{1/3} 2^n \right)} \exp{\left( -\frac{6}{(2C)^{1/3}} T^{1/3}u^{4/3}  \right)}\\
\leq  L_0\left( \exp{\left( -2 u^2  \right)}+\exp{\left( -\frac{6}{(2C)^{1/3}} T^{1/3}u^{4/3}  \right)} \right)
, \label{tes}
\end{gather}
where
\bee \label{defL}
L_0:= 8 \sum_{n\geq 1} \exp{\left( 2^n \log{2}-2^{3n/2+1} \right)}
\eee
and the last inequality holds if $T\geq \frac{2C}{27}$. Now, we will follow the approach similar to proof of  \cite[Theorem 2.2.27]{Talagrand2014}, to show that when $\Omega(u)$ occurs, we have 
\bee
\sup_{\theta\in\Theta}  | M_T^j (\te) -M_T^j (\te_0)| \leq L \left( \gamma_{4/3}(\Theta,d_1)+u \Delta_1(\Theta) \right).
\eee

\noindent
Consider $\vartheta\in\Theta$ and define by induction over $q\geq 0$ integers $n(\vartheta,q)$ as $n(\vartheta,0)=0$ and for $q\geq 1$

\bee
n(\vartheta,q)=\inf\{n \text{ }| \text{ } n\geq n(\vartheta,q-1) \text{ and } d_1(\vartheta, \Psi_{n})\leq \frac{1}{2} d_1(\vartheta, \Psi_{n(\vartheta,q-1)})\}. 
\eee

\noindent
We then consider $\pi_q(\vartheta)\in \Psi_{n(\vartheta,q)}$ with $d_1(\vartheta,\pi_q(\vartheta))=d_1(\vartheta,\Psi_{n(\vartheta,q)})$. Thus by induction it holds that 

\bee\label{pitheta}
d(\vartheta, \pi_q(\vartheta))\leq 2^{-q}\Delta_1(\Theta). 
\eee

\noindent Also when $\Omega(u)$ occurs we obtain

\bee
|M_T^j (\pi_q(\vartheta)) -M_T^j (\pi_{q-1}(\vartheta))|\leq 8(2^{3n(\vartheta,q)/4}+u)d_1(\pi_q(\vartheta),\pi_{q-1}(\vartheta)).
\eee

\noindent and 

\begin{align*}
|M_T^j (\theta) -M_T^j (\theta_0)|& \leq \sum_{q\geq 1} |M_T^j (\pi_q(\vartheta)) -M_T^j (\pi_{q-1}(\vartheta))| \\
& \leq \sum_{q\geq 1} 8(2^{3n(\vartheta,q)/4}+u)d_1(\pi_q(\vartheta),\pi_{q-1}(\vartheta))\\
& \leq \sum_{q\geq 1} 8(2^{3n(\vartheta,q)/4}+u)d_1(\vartheta, \pi_q(\vartheta))\\
&+ \sum_{q\geq 1} 8(2^{3n(\vartheta,q)/4}+u)d_1(\vartheta, \pi_{q-1}(\vartheta))
\end{align*}

\noindent We now control each summation separately. First  
\begin{align*}
\sum_{q\geq 1} 2^{3n(\vartheta,q)/4+3} d_1(\vartheta, \pi_q(\vartheta)) &\leq \sum_{q\geq 1} 2^{3n(\vartheta,q)/4+3} d_1(\vartheta, \Theta_{n(\vartheta,q)}) \leq 16 \gamma_{4/3}{(\Theta,d_1)}.
\end{align*}

\noindent  Now by the definition 

\bee
d_1(\vartheta,\pi_{q-1}(\vartheta))=d_1(\vartheta, \Psi_{n(\vartheta,q-1)})\leq 2d_1(\vartheta, \Psi_{n(\vartheta,q)-1})
\eee

\noindent so that

\bee
\sum_{q\geq 1} 2^{3n(\vartheta,q)/4+3}d_1(\vartheta, \pi_{q-1}(\vartheta)) \leq  \sum_{q\geq 1} 2^{3n(\vartheta,q)/4+4}d_1(\vartheta, \Psi_{n(\vartheta,q)-1})  \leq 2^{23/4} \gamma_{4/3}{(\Theta,d_1)}. 
\eee

\noindent Finally, we can notice that \eqref{pitheta} implies 

\bee
\sum_{q\geq 1} 8 (d_1(\vartheta, \pi_q(\vartheta))+d_1(\vartheta, \pi_{q-1}(\vartheta))) \leq 24 \Delta(\Theta),
\eee

\noindent which finishes the proof for $L=\max(16+2^{23/4},L_0)=16+2^{23/4}$. 

\end{proof}

\noindent Now, we apply the statement of Proposition \ref{realfinallem} to control $\P(\mathcal{T})$. We recall the definition of $\la_1$ at  \eqref{lambda1}.
Due to previous proposition, we deduce via the union bound that  
\begin{gather}
\P_{\te_0}\left( \left \| \sup_{\te \in \Te} \frac{1}{T} \int_0^T \big( \dot b_{\te}(X_t))^{\star} dW_t  \right \|_{\infty} \geq \frac{\lambda}{2} \right)
\leq \sum_{j=1}^p \P_{\te_0}\left( \left|\sup_{\te \in \Te} \frac{1}{\sqrt{T}} \int_0^T \big( \dot b_{\te,j}(X_t))^{\star} dW_t  \right | \geq \frac{\lambda}{2} \sqrt{T} \right) \\
\leq \sum_{j=1}^p \left( \P_{\te_0}\left(  \sup_{\theta\in\Theta}  |M_T^j (\te) -M_T^j (\te_0)|)\geq \frac{\lambda}{4}\sqrt{T}\right) 
+  \P_{\te_0}\left(  |M_T^j (\te_0)|\geq \frac{\lambda}{4}\sqrt{T}\right) \right) \\
\leq 2 p L \exp{\left( -2\left( \frac{\lambda\sqrt{T}}{4L\Delta_1(\Theta)} -\frac{\gamma_{4/3}(\Theta,d_1)}{\Delta_1(\Theta)} \right)^2\right) } \\
+ 2 p L \exp{\left( -\frac{6}{(2C)^{1/3}}T^{1/3} \left( \frac{\lambda\sqrt{T}}{4L\Delta_1(\Theta)} -\frac{\gamma_{4/3}(\Theta,d_1)}{\Delta_1(\Theta)} \right)^{4/3}\right) }  \leq \epsilon, 
\end{gather}

\noindent where the last inequality holds since $\lambda \geq \lambda_1$. This completes the proof of the first part of Theorem \ref{controlT}.

\subsection{Treatment of the set $\mathcal{T}'$} \label{sec6.2}

We start by defining the random 
function ($u\in \R^p$ with $\|u\|_2=1$)
\bee \label{Hdef}
H_{u}(\te):= \frac{u^{\star}}{T} \int_0^T \left(\dot b_{\te}(X_t)\dot b_{\te}(X_t)^{\star} - \E_{\te_0}[\dot
b_{\te}(X_t)  \dot b_{\te}(X_t)^{\star} ]\right)dt \times u.
\eee
We first show a uniform probability bound for the process $H$.

\begin{lem}\label{LemmaTauPrime}Assume that conditions  \eqref{bcond2}, ($\mathcal{B}$) are satisfied. Then we obtain the bound
\bee
\sup_{u \in \Theta} \P_{\te_0}\bigg( \sup_{\xi\in\Theta}\frac{|H_{u}(\te_0)-H_{u}(\xi)|}{\|  u \|_2^2} \geq y \bigg) \leq L \exp \left( -\frac{1}{\Delta^2_2(\Theta)} \left( \frac{y}{L}\sqrt{\frac{T}{2C}}-\gamma_2(\Theta,d_2) \right)^2 \right)
\eee
where $L$ has been introduced at \eqref{defL}. 
\end{lem}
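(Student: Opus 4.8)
The plan is to mirror the treatment of the set $\mathcal{T}$ carried out in Section~\ref{sec6.1}, replacing the martingale increments $M_T^j(\te)-M_T^j(\vartheta)$ by the increments of $H_u$ and the distance $d_1$ by $d_2$. Fix $u\in\Theta$. The crucial structural observation is that, in contrast to $M_T^j$, the increment $H_u(\te)-H_u(\xi)$ is \emph{directly} a centred time-average of a scalar function: writing
\[
g_{\te,\xi}(x):= u^{\star}\big(\dot b_{\te}(x)\dot b_{\te}(x)^{\star}-\dot b_{\xi}(x)\dot b_{\xi}(x)^{\star}\big)u=\sum_{i,j=1}^d u_i u_j\big((\dot b_{\te}\dot b_{\te}^{\star})_{ij}-(\dot b_{\xi}\dot b_{\xi}^{\star})_{ij}\big)(x),
\]
we have $H_u(\te)-H_u(\xi)=\frac1T\int_0^T\big(g_{\te,\xi}(X_t)-\E_{\te_0}[g_{\te,\xi}(X_0)]\big)dt$. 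Under assumption ($\mathcal{B}$)(a) each summand is Lipschitz, hence so is $g_{\te,\xi}$, and since the entries of $\mathcal{M}_\text{Lip}(\te,\xi)$ in \eqref{defM} are nonnegative,
\[
\|g_{\te,\xi}\|_{\text{Lip}}\leq \sum_{i,j}|u_i||u_j|\,\big\|(\dot b_{\te}\dot b_{\te}^{\star})_{ij}-(\dot b_{\xi}\dot b_{\xi}^{\star})_{ij}\big\|_{\text{Lip}}=|u|^{\star}\mathcal{M}_\text{Lip}(\te,\xi)|u|\leq d_2(\te,\xi)\,\|u\|_2^2,
\]
where $|u|$ denotes the vector of absolute values of the entries of $u$.

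Because $g_{\te,\xi}$ is Lipschitz, I would apply Theorem~\ref{expbound} \emph{directly} to $g_{\te,\xi}$, without the exponential-martingale and Cauchy--Schwarz detour that was needed for $M_T^j$ in Proposition~\ref{Propexp}. This yields the purely sub-Gaussian exponential moment bound $\E_{\te_0}[\exp(\mu(H_u(\te)-H_u(\xi)))]\leq \exp\big(C\mu^2 d_2^2(\te,\xi)\|u\|_2^4/T\big)$ for all $\mu\in\R$, with no fourth-order term. Chebyshev's inequality combined with the optimal choice of $\mu$ then gives the single-regime concentration estimate
\[
\P_{\te_0}\Big(\tfrac{|H_u(\te)-H_u(\xi)|}{\|u\|_2^2}>v\,d_2(\te,\xi)\Big)\leq 2\exp\Big(-\tfrac{v^2 T}{4C}\Big),\qquad v>0.
\]
Once normalised by $\|u\|_2^2$ this bound is entirely independent of $u$, which is precisely why the statement of the lemma carries $\sup_{u\in\Theta}$ outside the probability rather than inside it.

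It then remains, for fixed $u$, to upgrade this non-uniform estimate to a bound on $\sup_{\xi\in\Theta}|H_u(\te_0)-H_u(\xi)|/\|u\|_2^2$. Here I would run the generic chaining argument exactly as in the proof of Proposition~\ref{realfinallem}, but now with the metric $d_2$ in place of $d_1$ and with the exponent $\al=2$ instead of $\al=4/3$: since the increment tail is sub-Gaussian in a \emph{single} regime, the admissible-sequence sums involve $\sum_{n}2^{n/2}\Delta(A_n(\cdot),d_2)$ and therefore produce $\gamma_2(\Theta,d_2)$ rather than $\gamma_{4/3}(\Theta,d_1)$. Tracking the sub-Gaussian scale through the chaining and then solving the threshold relation $\frac{y}{L}\sqrt{T/(2C)}=\gamma_2(\Theta,d_2)+v\,\Delta_2(\Theta)$ for $v$ converts the chaining output into the asserted form $L\exp(-v^2)$, with the same combinatorial constant $L$ from \eqref{defL}.

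The genuinely new probabilistic input, namely the sub-Gaussian increment bound, is already in hand from the direct application of Theorem~\ref{expbound}; consequently I expect the main obstacle to be purely the chaining bookkeeping. Concretely, one must reproduce the telescoping along the chain $\pi_q(\xi)$, control the two sums $\sum_{q\geq1}2^{n(\xi,q)/2}d_2(\xi,\pi_q(\xi))$ and $\sum_{q\geq1}2^{n(\xi,q)/2}d_2(\xi,\pi_{q-1}(\xi))$ by $\gamma_2(\Theta,d_2)$, and verify that the largeness of $T$ forces the geometric series in the union bound (over the layers $\Psi_n$) to converge, so that the aggregate constant collapses to $L$. These are exactly the steps performed in Proposition~\ref{realfinallem}, transported verbatim from the exponent $4/3$ to the exponent $2$.
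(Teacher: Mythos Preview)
Your proposal is correct and follows essentially the same route as the paper: both identify $H_u(\te)-H_u(\xi)$ as a centred time-average of a Lipschitz function with $\|\cdot\|_{\text{Lip}}\leq d_2(\te,\xi)\|u\|_2^2$, apply Theorem~\ref{expbound} directly to obtain the single-regime sub-Gaussian increment bound $2\exp(-Ty^2/(4C))$, and then upgrade to uniformity in $\xi$ via generic chaining with $\alpha=2$. The only differences are cosmetic---the paper phrases the Lipschitz estimate as a bound on a path-space functional and invokes \cite[Theorem~2.2.27]{Talagrand2014} directly rather than rerunning the argument of Proposition~\ref{realfinallem}, whereas you spell out the quadratic-form estimate $|u|^{\star}\mathcal{M}_{\text{Lip}}(\te,\xi)|u|\leq d_2(\te,\xi)\|u\|_2^2$ and plan to repeat the chaining by hand.
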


\begin{proof}

For $u\in \Theta$, $\|u\|_2=1$ lets consider functional $F_{u,\theta,\xi}:C([0,T],\R^d)\to\R$ such that 
\bee
F_{u,\theta,\xi}(X[0,T])=|H_{u}(\te)-H_{u}(\xi)|
\eee
Then by definition the following estimate holds 
\bee\label{LambdaDist}
\|F_{u,\theta,\xi}\|_{\text{Lip}} \leq \frac{d_2(\theta,\xi)}{T} .
\eee
Then once again according to Theorem \ref{expbound} we deduce that 
\bee
\P_{\te_0} \left( \frac{|H_{u}(\theta)-H_{u}(\xi)|}{\|  u \|_2^2} \geq y d_2(\theta,\xi) \right) \leq 2 \exp\left( -\frac{T}{4C} y^2 \right).
\eee
Applying Theorem 2.2.27 in \cite{Talagrand2014} we obtain for all $u\in\Theta$:
\bee
\P_{\te_0}\bigg( \sup_{\xi\in\Theta}\frac{|H_{u}(\te_0)-H_{u}(\xi)|}{\|  u \|_2^2} \geq \sqrt{\frac{2C}{T}}L \left(\gamma_2(\Theta, \Lambda)+ y \Delta(\Theta,\Lambda) \right)
\bigg)\leq L \exp{(-y^2)}
\eee
This finishes the proof. 
\end{proof}

\noindent
Now we proceed with estimation of the probability $\P(\mathcal{T}')$.
We observe that 
\begin{gather*}
 \frac{\|b_{\theta} - b_{\vartheta} \|_{T}^2}{\|  \theta - \vartheta\|_2^2}
 = \frac{\E [\|b_{\theta} - b_{\vartheta} \|_{T}^2]}{\|  \theta - \vartheta\|_2^2} - \frac{\E [\|b_{\theta} - 
 b_{\vartheta} \|_{T}^2]-\|b_{\theta} - b_{\vartheta} \|_{T}^2}{\|  \theta - \vartheta\|_2^2} \geq \\
l_{\text{min}} -  \frac{\E [\|b_{\theta} - b_{\vartheta} \|_{T}^2]-\|b_{\theta} - b_{\vartheta} \|_{T}^2}{\|  \theta - \vartheta\|_2^2}
\end{gather*}
where the constant $l_{\text{min}}$ has been defined in  \eqref{deffrakl}.
Consequently, we deduce the inequality
\begin{gather*}
\P_{\te_0}\bigg( \inf_{\substack{\theta\neq \vartheta \in \Theta: \\ \theta-\vartheta \in \mathcal{C}(s,3+4/\gamma)}}  \frac{[\|b_{\theta} - b_{\vartheta} \|_{T}^2}{\|  \theta - \vartheta\|_2^2}\geq \frac{l_{\text{min}}}{2}\bigg) \\
\geq \P\bigg( \sup_{\substack{\theta\neq \vartheta \in \Theta: \\ \theta-\vartheta \in \mathcal{C}(s,3+4/\gamma)}}  \frac{\E [\|b_{\theta} - b_{\vartheta} \|_{T}^2]-\|b_{\theta} - b_{\vartheta} \|_{T}^2}{\|  \theta - \vartheta\|_2^2}\leq \frac{l_{\text{min}}}{2}\bigg).
\end{gather*}
Applying the mean value theorem we can rewrite for some $\xi\in\Theta$
\bee
\E_{\te_0} [\|b_{\theta} - b_{\vartheta} \|_{T}^2]-\|b_{\theta} - b_{\vartheta} \|_{T}^2= -H_{\theta-\vartheta}(\xi).
\eee
To summarize, we obtain that
\begin{gather*}
\P_{\te_0}\bigg( \inf_{\substack{\theta\neq \vartheta \in \Theta: \\ \theta-\vartheta \in \mathcal{C}(s,3+4/\gamma)}}  \frac{[\|b_{\theta} - b_{\vartheta} \|_{T}^2}{\|  \theta - \vartheta\|_2^2}\geq \frac{l_{\text{min}}}{2}\bigg) \geq 
1 - \P_{\te_0}\bigg(\sup_{u \in \mathcal{C}(s,3+4/\gamma)}  \sup_{\xi\in\Theta}\left(-\frac{H_{u}(\xi)}{\|  u \|_2^2} \right) \geq \frac{l_{\text{min}}}{2}\bigg)\geq \\
1- \P_{\te_0}\bigg( \sup_{u \in \mathcal{C}(s,3+4/\gamma)} \sup_{\xi\in\Theta}\frac{|H_{u}(\te_0)-H_{u}(\xi)|}{\|  u \|_2^2} \geq \frac{l_{\text{min}}}{4}\bigg)
-\P_{\te_0}\bigg( \sup_{u \in \mathcal{C}(s,3+4/\gamma)}  \frac{|H_{u}(\te_0)|}{\|  u \|_2^2} \geq \frac{l_{\text{min}}}{4}\bigg).
\end{gather*}
The second probability is easy to analyse, so we will concentrate on bounding the first one. Similarly to Lemmas F.1 and F.3 from the supplementary material of \cite{BM15} one can show that 
\begin{gather*}
\P_{\te_0}\bigg( \sup_{u \in \mathcal{C}(s,3+4/\gamma)} \sup_{\xi\in\Theta}\frac{|H_{u}(\te_0)-H_{u}(\xi)|}{\|  u \|_2^2} \geq \frac{l_{\text{min}}}{4}\bigg)\leq \\
\P_{\te_0}\bigg( 3(c_0+2)^2\sup_{u \in \mathcal{K}(2s)} \sup_{\xi\in\Theta}\frac{|H_{u}(\te_0)-H_{u}(\xi)|}{\|  u \|_2^2} \geq \frac{l_{\text{min}}}{4}\bigg)\leq \\
21^{2s} \left( p^{2s} \wedge \left( \frac{ep}{2s} \right)^{2s} \right) \sup_{u \in \Theta} \P_{\te_0}\bigg( \sup_{\xi\in\Theta}\frac{|H_{u}(\te_0)-H_{u}(\xi)|}{\|  u \|_2^2} \geq \frac{l_{\text{min}}}{36(c_0+2)^2}\bigg)
\end{gather*}
Consequently, the second part of Theorem  \ref{controlT} follows from  Lemma \ref{LemmaTauPrime}. This completes the proof for $L=16+2^{23/4}$.

\begin{acks}[Acknowledgments]
The authors would like to thank Claudia Strauch, Richard Nickl, Yuri Kutoyants and Jon Wellner 
for useful comments.
\end{acks}
\begin{funding}
The authors gratefully acknowledge financial support of ERC Consolidator Grant 815703
``STAMFORD: Statistical Methods for High Dimensional Diffusions''. 
\end{funding}



\begin{thebibliography}{4}

\bibitem{AST22} \textsc{Aeckerle-Willems, C. }, \textsc{Strauch, C.} and \textsc{Trottner, L.} (2022). Concentration analysis of multivariate elliptic diffusion processes. {\em Working paper}, available at https://arxiv.org/abs/2206.03329. 


\bibitem{BM15} 
\textsc{Basu, S.} and \textsc{Michailidis, G.} (2015). Regularized estimation in sparse high-dimensional time series models. {\em Annals of Statistics}
\textbf{43(4)}, 1535--1567. 

\bibitem{BPP22} 
\textsc{Belomestny, D. }, \textsc{Pilipauskaite, V.} and \textsc{Podolskij, M.} (2023). Semiparametric estimation of McKean-Vlasov SDEs. {\em Annales de l'Institut Henri Poincar\'e}
\textbf{59(1)}, 79--96.

\bibitem{Bhat}  
\textsc{Bhattacharya, R. N.} (1978). Criteria for recurrence and existence of invariant measures for multidimensional diffusions. {\em Ann. Probab.} \textbf{6 (4)}, 541–-553. 

\bibitem{BRT09} 
\textsc{Bickel, P.J.}, \textsc{Ritov, Y.} and \textsc{Tsybakov, A.B.} (2009). Simultaneous analysis of Lasso and Dantzig selector. {\em Annals of Statistics} \textbf{37}, 1705--1732.

\bibitem{BCC11} 
\textsc{Bolley, F.}, \textsc{Ca$\tilde{\text{n}}$izo, J.A.} and \textsc{Carrillo, J.A.} (2011): Stochastic mean-field limit: non-Lipschitz forces and swarming. {\em Mathematical Models and Methods in Applied Sciences}
\textbf{21(11)}, 2179--2210.

\bibitem{BG99} 
\textsc{Bobkov, S.G.} and \textsc{G\"otze, F.} (1999). Exponential integrability and transportation cost related to logarithmic Sobolev inequalities. {\em Journal of Functional Analysis} \textbf{163}, 1--28.

\bibitem{BV} 
\textsc{B{\"u}hlmann, P.} and \textsc{van de Geer, S.} (2011).  {\em Statistics for high-dimensional data.} Springer Series in Statistics, Springer. 

\bibitem{Can07} 
\textsc{Candes, E.} and \textsc{Tao, T.} (2007). The Dantzig selector: Statistical estimation when $p$ is much larger than $n$. {\em Annals of Statistics} \textbf{35(6)}, 2313--2351.

\bibitem{CZ16} 
\textsc{Carmona, R.} and \textsc{Zhu, X.} (2016). A probabilistic approach to mean field games with major and minor players. {\em Annals of Applied Probability} \textbf{26(3)}, 1535--1580. 

\bibitem{CMP20} 
\textsc{Ciolek, G.}, \textsc{Marushkevych, D.} and \textsc{Podolskij, M.} (2020). On Dantzig and Lasso estimators of the drift in a high dimensional Ornstein-Uhlenbeck model. {\em Electronic Journal of Statistics} \textbf{14(2)}, 4395--4420.

\bibitem{DH21} 
\textsc{Della Maestra, L.} and \textsc{Hoffmann, M.} (2021). Nonparametric estimation for interacting particle systems: Mckean-Vlasov models. To appear in {\em Probability Theory and Related Fields}.


\bibitem{DI12} 
\textsc{De Gregorio, A.} and \textsc{Iacus, S.} (2012). Adaptive Lasso-type estimation for multivariate diffusion processes. {\em Econometric Theory}  \textbf{28},
838--860.

\bibitem{DGW04} 
\textsc{Djellout, H.}, \textsc{Guillin, A.} and \textsc{Wu, L.} (2004). Transportation cost-information inequalities and applications to random dynamical systems and diffusions. {\em Annals of Probability} \textbf{32}, 2702--2732.

\bibitem{DS22} 
\textsc{Dexheimer, N.} and \textsc{Strauch, C.} (2022). On Lasso and Slope drift estimators for L\'evy-driven Ornstein-Uhlenbeck processes. {\em Working paper}, available at https://arxiv.org/abs/2205.07813.

\bibitem{D15} 
\textsc{T. Dirksen} (2015): Tail bounds via generic chaining. {\em Electronic Journal of Probability} \textbf{20}, 1--29. 

\bibitem{dicker2014} 
\textsc{Dicker, L.}, \textsc{Li, Y.} and \textsc{Zhao, SD} (2014): The Dantzig selector for censored linear regression models. {\em Statistica Sinica}
\textbf{24(1)}, 251--275

\bibitem{FTC09} 
\textsc{Faugeras, O.}, \textsc{Touboul, J.} and \textsc{Cessac,B.} (2009).
A constructive mean-field analysis of multi-population neural networks with random synaptic weights and stochastic inputs. {\em Frontiers in Computational Neuroscience} \textbf{3}, 1--28.   

\bibitem{F19} 
\textsc{Fujimori, K.} (2019). The Dantzig selector for a linear model of diffusion processes. {\em Statistical Inference for Stochastic Processes} \textbf{22}, 475--498.

\bibitem{GM19} 
\textsc{Ga{\"i}ffas, S.} and \textsc{Matulewicz, G.} (2019). Sparse inference of the drift of a high-dimensional
Ornstein-Uhlenbeck process. {\em Journal of Multivariate Analysis} \textbf{169}, 1--20.

\bibitem{GGW14} 
\textsc{Gao, F.}, \textsc{Guillin, A.} and \textsc{Wu, L.} (2014). Bernstein-type concentration inequalities for symmetric Markov processes. {\em Probability Theory and Its Applications} \textbf{58(3)}, 358--382.

\bibitem{GL20}  
\textsc{Genon-Catalot, V.} and \textsc{Laredo, C.} (2020). Parametric inference for small variance and long time horizon McKean-Vlasov diffusion models.
{\em Working paper}. 

\bibitem{GLWY09} 
\textsc{Guillin, A.}, \textsc{Leonard, C.}, \textsc{Wu, L. } and \textsc{Yao, N.} (2009). Transportation-information inequalities for Markov processes. {\em Probability Theory and Related Fields} \textbf{144}, 669--695.

\bibitem{J08a} 
\textsc{Jackson, M.} (2008). {\em Social and economic networks.} Princeton, NJ: Princeton University Press.

\bibitem{JP12} 
\textsc{Jacod, J.} and \textsc{Protter, P.} (2012). {\em Discretization of processes.} Stochastic Modelling and Applied Probability, Springer. 

\bibitem{Jam09} 
\textsc{James, G.}, \textsc{Radchenko, P.}, and \textsc{Lv, J.} (2009). Dasso: connections between the Dantzig selector
and Lasso. {\em Journal of the Royal Statistical Society: Series B (Statistical Methodology)} \textbf{71(1)}, 127--142

\bibitem{K90} 
\textsc{Kasonga, R.A.} (1990). Maximum likelihood theory for large interacting systems. {\em SIAM Mathematics} \textbf{50(3)}, 865--875.

\bibitem{KR01} 
\textsc{Kessler, M.} and \textsc{Rahbek, A.} (2001). Asymptotic likelihood based inference for
co-integrated homogenous Gaussian diffusions. {\em Scandinavian Journal of Statistics} \textbf{28}, 455--470.

\bibitem{KS97} 
\textsc{K{\"u}chler, U.} and \textsc{S{\o}rensen, M.} (1997). {\em Exponential families of stochastic processes.} Springer Series in Statistics, Springer. 

\bibitem{KS99} 
\textsc{K{\"u}chler, U.} and \textsc{S{\o}rensen, M.} (1999). A note on limit theorems for multivariate martingales. {\em Bernoulli} \textbf{5(3)}, 483--493. 

\bibitem{K04} 
\textsc{Kutoyants, Y. A.} (2004). {\em Statistical inference for ergodic diffusion processes.} Springer Series in Statistics, Springer. 


\bibitem{M66} 
\textsc{McKean, H.P.} (1966). Speed of approach to equilibrium for Kac's caricature of a Maxwellian gas. {\em Archive for Rational Mechanics and Analysis} \textbf{21(5)}, 343--367.

\bibitem{M67}
\textsc{McKean, H.P.} (1967). Propagation of chaos for a class of non-linear parabolic equations. In {\em Stochastic Differential Equations (Lecture Series in Differential Equations, Session 7, Catholic University}, 41--57. Air Force Office of Scientific Research, Arlington.

\bibitem{NV09} 
\textsc{Nourdin, I.} and \textsc{Viens, F.G.} (2009). Density formula and concentration inequalities with Malliavin calculus. {\em Electronic Journal of Probability} \textbf{14}, 2287--2309. 

\bibitem{N06} 
\textsc{Nualart, D.} (2006). {\em The Malliavin calculus and related topics.} 2nd edition, Probability and Its Applications, Springer. 

\bibitem{PI14} 
\textsc{Periera, J.B.A.} and \textsc{Ibrahimi, M.} (2014). Support recovery for the drift coefficient
of high-dimensional diffusions. {\em IEEE Trnasactions of Information Theory} \textbf{60(7)}, 4026--4049.

\bibitem{RY05} 
\textsc{Revuz, D.} and \textsc{Yor, M.} (2005): {\em Continuous martingales and Brownian motion}. 3rd edition, A Series of Comprehensive Studies in Mathematics, Springer.

\bibitem{RogWil} 
\textsc{Rogers, L.C.G.} and \textsc{Williams, D.} (1987).  {\em Diffusions, Markov processes, and martingales. Vol. 2. Itô calculus. } Wiley Series in Probability and Mathematical Statistics, New York.  

\bibitem{Sa12} 
\textsc{Saussereau, B.} (2012). Transportation inequalities for stochastic differential equations driven by a fractional Brownian motion. {\em Bernoulli} \textbf{18(1)},  1--23.

\bibitem{SKPP21} 
\textsc{Sharrock, L.}, \textsc{Kantas, N.}, \textsc{Parpas, P.}, \textsc{Pavliotis, G.A.} (2021). Parameter estimation for the McKean-Vlasov stochastic differential equation.
{Working paper}.

\bibitem{S12}
\textsc{Spokoiny, V.} (2012). Parametric estimation. Finite sample theory. {\em Annals of Statistics}
\textbf{40(6)}, 2877--2909. 

\bibitem{S17} 
\textsc{Spokoiny, V.} (2017). Penalized maximum likelihood estimation and effective dimension. {\em Annales de l'Institut Henri Poincar\'e} \textbf{53(1)}, 1389--429.

\bibitem{S91} 
\textsc{Sznitman, A.-S.} (1991). {\em Topics in propagation of chaos.} 
In P.-L. Hennequin, editor, {\em  \'Ecole d'\'Et\'e de 
Probabilit\'es de Saint Flour XIX - 1989}, volume 1464 of Lecture Notes in Mathematics, 
Springer, Berlin, 165--251.

\bibitem{Talagrand2005} 
\textsc{Talagrand, M.} (2005). {\em The generic chaining.}    Springer Monographs in Mathematics, Springer. 

\bibitem{Talagrand2014} 
\textsc{Talagrand, M.} (2014). {\em Upper and lower bounds for stochastic processes.}  Springer, Berlin Heidelberg.  

\bibitem{T09} \textsc{Tsybakov, A.B.} (2009). 
{\em Introduction to nonparametric estimation.} Springer Series in Statistics, Springer.  

\bibitem{VW96} 
\textsc{van der Vaart, A.} and \textsc{J.A. Wellner} (1996). {\em Weak convergence and empirical processes}. Springer Series in Statistics, Springer.

\bibitem{Varvenne} 
\textsc{Varvenne, M.} (2019). Concentration inequalities for stochastic differential equations with additive fractional noise.   {\em Electronic Journal of Probability} \textbf{24}, 1--22.

\bibitem{Vers} 
\textsc{Vershynin, R.} (2009). \textit{Lectures in Geometric Functional Analysis.} available at
http://www-personal.umich.edu/ romanv/papers/GFA-book/GFA-book.pdf. 

\end{thebibliography}


\end{document}